\documentclass{article}
\pdfoutput=1
\usepackage[english]{babel}
\usepackage[percent]{overpic}
\usepackage{graphics}
\usepackage{float}
\usepackage{caption}
\usepackage{adjustbox}
\usepackage{mathtools}
\usepackage{graphicx} 
\usepackage{cite}
\usepackage{amssymb}
\usepackage{amsmath}
\usepackage{amsthm}
\usepackage{graphicx}
\usepackage{appendix}
\usepackage{cases}
\usepackage{color}
\usepackage[ruled,linesnumbered]{algorithm2e}
\usepackage{algorithmic}
\usepackage{tikz}
\usetikzlibrary{external}
\newtheorem{theorem}{Theorem}[section]

\newtheorem{problem}{Problem}[section]

\newtheorem{proposition}{Proposition}[section]
\usepackage{amsfonts}
\usepackage{color,xcolor}

\usepackage{multirow}
\usepackage{soul}
\theoremstyle{remark}
\newtheorem{remark}[theorem]{Remark}

\usepackage[colorlinks=true, allcolors=blue]{hyperref}
\usepackage{appendix}
\usepackage{booktabs}
\usepackage{multirow}
\usepackage{cases}
\usepackage{array}

\usepackage{newtxtext,newtxmath}
\usepackage{makecell}
\usepackage{indentfirst}
\hypersetup{colorlinks = true, urlcolor = blue}
\usepackage{subfigure}

\usepackage[letterpaper,top=2cm,bottom=2cm,left=3cm,right=3cm,marginparwidth=1.75cm]{geometry}

\title{A Jacobi Field Approach to Splitting Detection \\ in Schr\"{o}dinger Bridge}
\author{
 \normalsize{
Chunhai Jiao$^{1,2,}$\footnotemark[2]\,,
Jin Guo$^{6}$\footnotemark[2]\,,
Haoyan Zhang$^{1,2}$\footnotemark[2]\,,
Jinqiao Duan$^{4,5,}$\footnotemark[3]\,,
Ting Gao$^{1,2,3}$\footnotemark[1]\
}\\[10pt]
\footnotesize{$^1$ School of Mathematics and Statistics, Huazhong University of Science and Technology, Wuhan, China} \\
\footnotesize{$^2$ Center for Mathematical Science, Huazhong University of Science and Technology, Wuhan, China} \\
\footnotesize{$^3$ Steklov-Wuhan Institute for Mathematical Exploration, Huazhong University of Science and Technology, China} \\
\footnotesize{$^4$ Department of Mathematics and Department of Physics, Great Bay University, Dongguan, China} \\
\footnotesize{$^5$ Guangdong Provincial Key Laboratory of Mathematical and Neural dynamic Systems, Dongguan, China. }\\
\footnotesize{$^6$ Department of Mathematics, City University of Hong Kong, Hong Kong, China.}}

\begin{document}

\maketitle

\begin{abstract}
We study the problem of detecting the onset of path splitting in stochastic interpolation between probability distributions. This question is especially subtle when the target distribution is nonconvex or supported on disconnected components, where interpolating trajectories may separate into distinct branches. Motivated by the stochastic control and Schrödinger bridge viewpoint, we propose a Jacobi field based indicator for identifying candidate splitting times and locations. Our approach is based on the Jacobi field associated with the linearization of an induced interpolating flow. Starting from a stochastic interpolation ansatz, we construct an Eulerian velocity field by conditional averaging and derive its spatial Jacobian in terms of the local posterior geometry of the target sample cloud. This allows us to interpret the symmetric part of the Jacobian as a local strain tensor and to use its spectral structure to quantify the amplification of infinitesimal perturbations along reference trajectories. Numerical experiments on nonconvex and disconnected target distributions show that the proposed indicator consistently localizes the emergence of branching regions and captures the temporal development of splitting. These results suggest that Jacobi field analysis provides a natural mathematical framework for studying local instability and splitting phenomena in stochastic interpolation.
\end{abstract}
\par\textbf{Keywords: } Schr\"odinger Bridge, Stochastic Interpolation, Splitting, Jacobi Field.

\section{Introduction}
Detecting trajectory splitting in stochastic interpolation~\cite{JMLR:v26:23-1605} between probability distributions is an important problem in the analysis of noisy transport~\cite{doi:10.1137/16M1061382}, especially when the target distribution is nonconvex or supported on disconnected components~\cite{lei2019mode}. In contrast to the classical rare-event setting, where one seeks a dominant transition pathway through large-deviation or path-integral principles, distribution-to-distribution transport involves an entire evolving family of random trajectories, whose branching geometry is considerably more subtle. Classical approaches based on Freidlin–Wentzell theory~\cite{kifer1988random} and path-integral methods~\cite{huang2021estimating} provide powerful descriptions of rare transitions, while the Schrödinger bridge~\cite{schrodinger1932theorie} offers a natural variational framework for stochastic interpolation by linking stochastic control~\cite{chen2021stochastic}, entropy minimization~\cite{chen2016relation}, and optimal transport~\cite{leonard2013survey}. This connection makes the analysis of trajectory geometry, including splitting phenomena, particularly relevant in contemporary machine learning applications.

Generative models \cite{goodfellow2014generative} are designed to approximate the latent distributions of real-world data and synthesize high-quality novel samples, facilitating the evolution of artificial intelligence from perception to creation. A cornerstone in this domain was the advent of Generative Adversarial Networks (GANs) \cite{goodfellow2014generative}. GANs utilize an adversarial training mechanism between a generator and a discriminator, enabling the generation of realistic samples without explicit density estimation. Despite their transformative impact, GANs are notoriously difficult to train and exhibit high sensitivity to hyperparameters. More critically, they frequently suffer from mode collapse \cite{kossale2022mode}, a phenomenon where the generator captures only a limited subset of the true distribution modes while neglecting others \cite{goodfellow2020generative}. From a theoretical point of view \cite{brenier1987polar, figalli2010regularity}, this issue is intrinsically linked to the discontinuity of optimal transport maps. 

To mitigate these limitations, Flow Matching (FM)\cite{lipman2022flow} has emerged as a compelling alternative to diffusion-based paradigms. By regressing a deterministic vector field that drives a probability density path from a prior to the data distribution, FM offers a simulation-free training objective that is both stable and efficient. The efficacy of FM in combating mode collapse is deeply rooted in Optimal Transport (OT) theory\cite{villani2008optimal}. Gu et al.~\cite{an2019ae} introduced a generative framework based on Extended Semi-Discrete Optimal Transport (AE-OT). Their theoretical analysis shows that mode collapse and sample mixing in GANs~\cite{goodfellow2014generative} and VAEs~\cite{kingma2013auto} stem from discontinuities in optimal transport maps under non-convex data distributions, while deep neural networks inherently represent continuous mappings. 

When the target distribution is nonconvex or supported on disconnected components, a stochastic interpolation that preserves the multi-modal structure of the target need not remain dynamically coherent throughout the entire evolution. This naturally motivates the question of whether, and when, the interpolating family of trajectories develops branch-like separation at intermediate times. Identifying the onset of this splitting~\cite{farina2020splitting} is a central question, since it marks the moment when the dynamics starts to encode the multi-branch geometry of the target law. Beyond its mathematical significance, such a detection mechanism may also be useful in applications where one seeks early indicators of structural transition. For instance, in data driven progression models from healthy to diseased populations, the detected splitting time may serve as a proxy for the onset of pathological divergence.

Our contributions are summarized as follows:
\begin{itemize}
    \item  We formulate splitting detection as a variational and geometric problem for stochastic interpolation between probability distributions. We introduce a particle level stochastic interpolation ansatz and derive the induced Eulerian velocity field by conditional averaging. Motivated by the classical Jacobi field in Riemannian geometry, we then establish the associated first order variational equations for nearby interpolating trajectories, thereby identifying the local velocity gradient as the fundamental object governing infinitesimal trajectory separation.
    \item Based on this variational structure, we develop a Jacobian based criterion for detecting local path splitting. In particular, we show that the symmetric part of the induced velocity gradient plays the role of a local strain tensor, and that its largest eigenvalue provides a computable measure of the strongest instantaneous stretching direction. This leads to a practical indicator for identifying candidate splitting times and locations. 
    \item We provide numerical evidence that the proposed Jacobi field criterion captures the emergence of splitting in stochastic interpolation toward nonconvex and disconnected targets. Specifically, by tracking the largest eigenvalue of the symmetric part of the induced velocity gradient, we observe that particles with the highest stretching intensity progressively concentrate near the visually emerging branching regions.
\end{itemize}

The remainder of this paper is organized as follows. Section 2 introduces the theoretical foundations underlying our approach. Section 3 formally defines the main research problem. Section 4 details the proposed methodology, including the solution framework and corresponding algorithms. Section 5 presents experimental results across diverse datasets, demonstrating the effectiveness and robustness of our method. Finally, Section 6 concludes with a summary of key findings and outlines directions for future research.

\section{Preliminaries}
Optimal Transport (OT) \cite{peyre2019computational,solomon2018optimal} provides a principled mathematical framework for comparing probability distributions. Tracing its origins to Monge’s 1781 transportation problem \cite{rachev1985monge}, the theory was later relaxed by Kantorovich to establish its modern foundation \cite{ambrosio2003optimal}. OT formulates the objective as finding a joint probability measure that minimizes transport cost subject to mass conservation constraints, yielding the Wasserstein distance as a geometric metric between distributions. A fundamental link between OT and stochastic processes is the Schrödinger Bridge (SB) problem, originally proposed by Schrödinger in the 1930s \cite{schrodinger1932theorie}. Emerging from a thought experiment in statistical mechanics regarding the likely evolution of Brownian particles between fixed initial and final states, the SB problem is mathematically equivalent to an entropy-regularized form of classical OT. Instead of solely minimizing transport cost, the SB seeks a probability law on paths that minimizes the relative entropy with respect to a reference Wiener process, while satisfying marginal constraints \cite{nutz2021introduction}.

Interest in the Schrödinger Bridge is driven by multiple theoretical and practical perspectives. Schrödinger’s original formulation connects the problem to large deviation theory and quantum mechanics \cite{berezin2012schrodinger}, interpreting it through the lens of rare events in statistical mechanics. Via Sanov’s theorem \cite{sanov1958probability} and Gibbs’ principle \cite{dembo2009large}, the SB problem aligns with the maximum entropy principle, ensuring a solid probabilistic foundation. In contemporary machine learning, the SB serves as a unifying framework connecting OT \cite{shi2023diffusion}, stochastic control \cite{chen2021stochastic}, and generative modeling \cite{de2021diffusion}. This perspective has yielded efficient algorithms for distribution interpolation, domain adaptation, and constrained generation, exhibiting both theoretical elegance and practical utility.

Beyond the Schrödinger bridge, stochastic interpolation has emerged as a broader framework for constructing probabilistic paths between prescribed endpoint distributions. In contrast to deterministic interpolation, stochastic interpolation incorporates random fluctuations into the intermediate dynamics, making it particularly suitable for modeling uncertainty, diffusion, and multi-modal transport behavior. Recent works have shown that this viewpoint provides a unifying perspective for flows, diffusions, and distribution matching, and has become increasingly relevant in generative modeling and probabilistic transport. From the perspective of related work, stochastic interpolation therefore serves as a natural extension of entropy regularized and bridge type formulations, while offering additional flexibility in the design of intermediate stochastic dynamics.

\subsection{Schrödinger bridge}
The most probable stochastic evolution between two prescribed marginal distributions $\rho_0$ and $\rho_1$ corresponds to the solution of the Schrödinger bridge problem:
\begin{equation}
    \mathcal{P}_{SB} := \mbox{argmin} \{D_{KL}(P\|W)|P\in \mathcal{D}(\rho_0,\rho_1)\},
\end{equation}
where $\mathcal{D}$ is the space of probability measures on $\Omega = C([0,1],\mathcal{X})$, and $W\in M_+({\Omega})$ is the reference path measure on $\Omega$. Here $\mathcal{X}$ is a complete connected Riemannian manifold without boundary, $M_+({\Omega})$ denotes positive measures on $\Omega$.

The goal of Schr\"odinger bridge is to obtain a pair $(\rho,u)$ that solves
\begin{numcases}{}
    &$\min\limits_{u,\rho} \mathcal{J}(u)=\int_0^1 \int_{\mathbb{R}^d}\|u_t\|^2\rho(t,x)\mbox{d}x\mbox{d}t,$\\
    &subject to $\partial_t \rho\,+\nabla\cdot (u\rho)=\varepsilon\Delta \rho$, \\
    &\qquad\quad $\rho(0)= \rho_0, \rho(1)=\rho_1.$
\end{numcases}

\subsection{Stochastic interpolation}
The stochastic interpolation~\cite{JMLR:v26:23-1605} between two prescribed marginal distributions \(\rho_0\) and \(\rho_1\)
is described by a probability measure \(\mathbb P\) on the path space $
\Omega := C([0,1],\mathcal X)$,
such that the associated canonical process \((X_t)_{t\in[0,1]}\) satisfies
\begin{equation*}
X_0 \sim \rho_0,
\qquad
X_1 \sim \rho_1.
\end{equation*}
Here \(\mathcal X\) is a complete connected Riemannian manifold, and \(\Omega\) denotes the space of
continuous trajectories on \([0,1]\).
A stochastic interpolation is typically realized as the law of a diffusion process solving
\begin{equation*}
\mathrm{d}X_t = b(t,X_t)\,\mathrm{d}t + \sigma(t,X_t)\,\mathrm{d}W_t,
\end{equation*}
where \(b\) is the drift term, \(\sigma\) denotes the noise intensity, and \(W_t\) is a standard Brownian motion.

The corresponding interpolating family of marginals is given by
\begin{equation*}
\rho_t := (X_t)_{\#}\mathbb P,
\qquad t\in[0,1],
\end{equation*}
so that \((\rho_t)_{t\in[0,1]}\) provides a probabilistic curve connecting \(\rho_0\) and \(\rho_1\).

Stochastic interpolation arises naturally in several settings, including stochastic control, diffusion processes, entropic optimal transport, and Schrödinger bridge problems. In all these cases, the central object is an evolving family of probability measures connecting prescribed endpoint distributions under random dynamics.

\section{Problem setup}
In this section, we formulate the stochastic interpolation problem at the continuum and particle levels, and state the formula of splitting detection objective, leaving numerical choices and implementation details to a later section.
\subsection{Particle approximation and stochastic interpolation}

Let \(\mu_0\) and \(\mu_1\) be two probability distributions on \(\mathbb R^d\), representing the initial and target distributions, respectively. We are interested in constructing a stochastic interpolation between \(\mu_0\) and \(\mu_1\), together with a velocity field that minimizes a kinetic type energy under noisy dynamics.

Our main objective is to formulate a mathematically tractable criterion for splitting detection in the interpolating dynamics, and in particular to detect the earliest splitting time and its associated spatial location.

Let \(\pi \in \Pi(\mu_0,\mu_1)\) be a prescribed coupling between the endpoint distributions. 
We sample i.i.d. endpoint pairs
\[
(X_0^{(i)},X_1^{(i)}) \sim \pi, \qquad i=1,\dots,N.
\]
In particular, the marginals satisfy $X_0^{(i)} \sim \mu_0,\, X_1^{(i)} \sim \mu_1$.

For a generic coupled endpoint pair \((X_0,X_1)\sim \pi\), we introduce a stochastic interpolation ansatz of the form
\begin{equation}\label{eq:ps_interp}
X_t=\alpha(t)X_0+\beta(t)X_1+\gamma(t)Z,
\qquad t\in[0,1],
\end{equation}
where \(Z\) is an auxiliary random variable, and \(\alpha(t)\), \(\beta(t)\), and \(\gamma(t)\) are scalar interpolation coefficients satisfy $\alpha(0)=\beta(1)=1,\alpha(1)=\beta(0)=\gamma(0)=\gamma(1)=0,\gamma(t)>0$. This ansatz induces a family of approximate trajectories connecting the sampled endpoints.

Differentiating \eqref{eq:ps_interp} with respect to time yields the pathwise velocity
\begin{equation}
\dot X_t=\dot\alpha(t)X_0+\dot\beta(t)X_1+\dot\gamma(t)Z.
\label{eq:pathwise_velocity}
\end{equation}
Here \(\dot X_t\) is understood as the Nelson derivative associated with the stochastic process \(X_t\).
This quantity is, in general, not a deterministic function of the current state \(X_t\). We therefore denote the associated Eulerian velocity field by conditional averaging,
\begin{equation}
v(x,t):=\mathbb E[\dot X_t \mid X_t=x],
\qquad (x,t)\in \mathbb R^d\times[0,1].
\label{eq:eulerian_velocity}
\end{equation}

The stochastic interpolation problem can be formulated as
\begin{numcases}{}
    &$\min\limits_{v} \mathcal E(v)=\mathbb{E}\left[\int_0^1\|v(x,t)\|^2\mbox{d}t\right],$\\
    &$v(x,t):=\mathbb E[\dot X_t \mid X_t=x]$, \\
    & $X_t=\alpha(t)X_0+\beta(t)X_1+\gamma(t)Z$,\\  &$\alpha(0)=\beta(1)=1,\alpha(1)=\beta(0)=\gamma(0)=\gamma(1)=0,\gamma(t)>0$,\\
    &$X_0\sim \mu_0(x), X_1\sim \mu_1(y),Z\sim \mathcal{N}(0,I_d)$
\end{numcases}

We next introduce the variational dynamics associated with the induced Eulerian flow. To motivate the second order viewpoint, we briefly recall the classical Jacobi field in Riemannian geometry.

\subsection{Jacobi field and variational dynamics}
In the classical Riemannian setting, let $(\mathcal M,g)$ be a smooth Riemannian manifold, and let $\Gamma:[0,T]\to \mathcal M$ be a geodesic. A vector field $
J(t)\in T_{\Gamma(t)}\mathcal M$
along $\Gamma$ is called a \emph{Jacobi field} if it satisfies
\begin{equation}
\frac{D^2}{dt^2}J(t)+R\bigl(J(t),\dot \Gamma(t)\bigr)\dot \Gamma(t)=0,
\qquad t\in[0,T],
\label{eq:Jacobi_classical}
\end{equation}
where $\frac{D}{dt}$ denotes the covariant derivative along $\Gamma$, and $R$ is the Riemann curvature tensor.

\begin{remark}[Standard meaning of the term Jacobi field]
Equation \eqref{eq:Jacobi_classical} is a second order linear equation along the reference geodesic $\Gamma$. 
Accordingly, a Jacobi field is determined by the pair of initial data $
J(0),\, \frac{D}{dt}J(0)$.
Thus, in the standard differential geometric terminology, the notion of a Jacobi field is intrinsically second order.
\end{remark}

This classical second order equation provides the geometric prototype for the deviation dynamics of nearby trajectories. In our setting, the corresponding first order variational equation is induced by the flow map, and the associated second order equation can be derived by differentiating once more in time.
\begin{proposition}
Let $v:[0,T]\times \mathbb R^d\to \mathbb R^d$ be a sufficiently smooth velocity field, and let
\[
\frac{\mathrm{d}}{\mathrm{d}t}x(t)=v\bigl(t,x(t)\bigr)
\]
be the associated flow equation. Consider a smooth one parameter family of trajectories
$
X:(-\varepsilon,\varepsilon)\times[0,T]\to\mathbb R^d, \frac{\partial}{\partial t}X(s,t)=v\bigl(t,X(s,t)\bigr)$, here \(s\in(-\varepsilon,\varepsilon)\) is a variation parameter labeling a family of nearby trajectories, while \(t\) denotes the time variable along each trajectory.
Define the variational field along the reference trajectory $x(t):=X(0,t)$ by
\[
J(t):=\frac{\partial X}{\partial s}(0,t),\quad J(0):=\frac{\partial X}{\partial s}(0,t)\bigg|_{t=0}.
\]
Then Jacobi field \(J\) satisfies the first order variational equation
\begin{equation}
\frac{\mathrm{d}}{\mathrm{d}t}J(t)=\nabla_x v\bigl(t,x(t)\bigr)\,J(t).
\label{eq:th_jacobian}
\end{equation}
Equivalently, it equals the second order Jacobi field
\begin{equation}
\frac{d^2}{dt^2}J(t)+M(t)J(t)=0.
\label{eq:second_variation}
\end{equation}
where
\begin{equation}
M(t):=
-
\Bigl(
\partial_t \nabla_x v
+
(v\cdot \nabla_x)(\nabla_x v)
+
(\nabla_x v)^2
\Bigr)\bigl(t,x(t)\bigr),
\label{eq:dynamic_curvature_matrix}
\end{equation}
\end{proposition}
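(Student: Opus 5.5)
The plan is to differentiate the flow identity with respect to the variation parameter $s$, and then once more in time. The only analytic input is that, because $v$ is sufficiently smooth (say $C^2$ in $(t,x)$), the family $X(s,t)$ is $C^2$ jointly in $(s,t)$. This follows from smooth dependence of ODE solutions on initial data, so mixed partials commute (Schwarz/Clairaut).

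\emph{First order equation.} Start from $\partial_t X(s,t)=v(t,X(s,t))$. Differentiate both sides in $s$ and evaluate at $s=0$. On the left, exchanging the order of differentiation gives $\partial_t\partial_s X(0,t)=\frac{d}{dt}J(t)$. On the right, the chain rule gives $\nabla_x v(t,x(t))\,\partial_s X(0,t)=\nabla_x v(t,x(t))J(t)$. This is \eqref{eq:th_jacobian}.

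\emph{Second order equation.} Write $A(t):=\nabla_x v(t,x(t))$, so that $\dot J=AJ$. Differentiating in $t$ gives $\ddot J=\dot A J+A\dot J=(\dot A+A^2)J$. The total derivative of $A$ along the trajectory follows from the chain rule together with $\dot x=v(t,x(t))$:
\[
\dot A(t)=\bigl(\partial_t\nabla_x v+(v\cdot\nabla_x)\nabla_x v\bigr)(t,x(t)).
\]
Here $(v\cdot\nabla_x)$ acts entrywise on the matrix $\nabla_x v$. Substituting this gives $\ddot J=-M(t)J$ with $M$ as in \eqref{eq:dynamic_curvature_matrix}, which is \eqref{eq:second_variation}.

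\emph{Scope of ``equivalently''.} The only delicate point is what ``equivalently'' asserts. Every solution of the first order system solves the second order one. The converse holds only for initial data satisfying the compatibility condition $\dot J(0)=A(0)J(0)$. Given that condition, both $J$ and the solution of the first order problem solve the same linear second order ODE with the same Cauchy data $(J(0),\dot J(0))$, and uniqueness for linear ODEs with continuous coefficients then gives equality. I would state the equivalence in this restricted sense: $J$ is the second order Jacobi field singled out by the data $(J(0),A(0)J(0))$. This parallels the earlier Remark's point that classical Jacobi fields are determined by two pieces of initial data.

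The main obstacle is only the regularity bookkeeping that justifies interchanging $\partial_s$ and $\partial_t$. I would handle it by citing differentiability of the flow map with respect to initial conditions for $C^2$ fields.
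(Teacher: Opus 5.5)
Your proposal is correct and follows the paper's proof essentially step for step. You differentiate the flow identity in $s$ at $s=0$, then differentiate $\dot J=\nabla_x v\,J$ in $t$ using the chain rule along $\dot x=v(t,x)$ to get $\ddot J=(\partial_t\nabla_x v+(v\cdot\nabla_x)\nabla_x v+(\nabla_x v)^2)J=-MJ$. Your extra care about commuting $\partial_s$ and $\partial_t$ is sound, as is your observation that the second order equation determines $J$ only together with the compatibility condition $\dot J(0)=\nabla_x v(0,x(0))J(0)$; the paper's word ``equivalently'' is justified there only in the forward direction.
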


\begin{proof}
Differentiating the flow equation
\[
\frac{\partial}{\partial t}X(s,t)=v\bigl(t,X(s,t)\bigr)
\]
with respect to \(s\), and then evaluating at \(s=0\), we obtain
\[
\frac{\mathrm{d}}{\mathrm{d}t}J(t)
=
\nabla_x v\bigl(t,x(t)\bigr)\,J(t),
\]
which proves \eqref{eq:th_jacobian}. Differentiating once more in time gives
\[
\frac{\mathrm{d}^2}{\mathrm{d}t^2}J(t)
=
\frac{\mathrm{d}}{\mathrm{d}t}\Bigl(\nabla_x v\bigl(t,x(t)\bigr)\Bigr)J(t)
+
\nabla_x v\bigl(t,x(t)\bigr)\frac{\mathrm{d}}{\mathrm{d}t}J(t).
\]
Using the chain rule along the trajectory \(x(t)\),
\[
\frac{\mathrm{d}}{\mathrm{d}t}\Bigl(\nabla_x v\bigl(t,x(t)\bigr)\Bigr)
=
\Bigl(
\partial_t \nabla_x v
+
(v\cdot \nabla_x)(\nabla_x v)
\Bigr)\bigl(t,x(t)\bigr),
\]
Equation \eqref{eq:second_variation} follows immediately from the definition of \(M(t)\).
\end{proof}

\begin{remark}[Relation between the first and second order formulations]
For a general first order dynamical system, \eqref{eq:th_jacobian} is the fundamental linearization of the flow map and describes the infinitesimal separation of nearby trajectories. 
Equation \eqref{eq:second_variation} is not an independent definition, but is derived from \eqref{eq:th_jacobian} by differentiating once more in time. 
It describes the infinitesimal relative acceleration between nearby trajectories through the operator \(M(t)\).

In contrast, in Riemannian geometry the standard Jacobi equation \eqref{eq:Jacobi_classical} is itself the primary formulation. 
When the underlying flow is a geodesic flow, the operator \(M(t)\) is exactly the curvature operator
\[
J\mapsto R\bigl(J,\dot \Gamma(t)\bigr)\dot \Gamma(t),
\]
and \eqref{eq:second_variation} reduces to the classical Jacobi equation. 
Therefore, the first order variational equation and the second order Jacobi equation represent the same deviation mechanism viewed at two different levels: the former at the level of infinitesimal separation, and the latter at the level of infinitesimal relative acceleration.
\end{remark}

We now turn to the splitting detection problem for the interpolating particle system. 
The goal is to identify the earliest time at which the local trajectory configuration ceases to remain coherent and begins to separate, together with the corresponding spatial location.

We then study its spatial Jacobian matrix
\begin{equation}
\nabla_x v(x,t)\in \mathbb R^{d\times d}.
\label{eq:velocity_jacobian}
\end{equation}

Let \(x_t\) be a reference trajectory generated by the induced velocity field, namely,
\begin{equation}
\dot x_t = v(x_t,t).
\label{eq:reference_trajectory}
\end{equation}
Along this trajectory, we define the associated variational matrix \(J_t\in\mathbb R^{d\times d}\) as the fundamental matrix solution of the linearized equation
\begin{equation}
\dot J_t = \nabla_x v(x_t,t)\,J_t,
\qquad J_0=I_d.
\label{eq:variational_matrix}
\end{equation}
This matrix describes the infinitesimal deformation of nearby trajectories relative to the reference path \(x_t\).

To prepare for the splitting criterion developed in the next subsection, we introduce a local diagnostic based on the induced velocity gradient along the reference trajectory. Specifically, we define the symmetric part of the Jacobian matrix by
\begin{equation}
S(t):=\frac{\nabla_x v(x_t,t)+\nabla_x v(x_t,t)^\top}{2}\in\mathbb R^{d\times d}.
\label{eq:strain_tensor}
\end{equation}
The matrix \(S(t)\) is the rate of strain tensor associated with the induced flow along \(x_t\). Its spectrum characterizes the instantaneous local stretching and compression generated by the Eulerian velocity field. In particular, pronounced positive stretching directions provide a natural signal for the onset of local trajectory separation. For this reason, \(S(t)\) will serve as a basic ingredient in our subsequent splitting detection criterion.

\begin{problem}
\label{prob:core_problem}
Assume that an optimal interpolating flow has already been constructed, and let \(v:\mathbb R^d\times[0,1]\to\mathbb R^d\) be its induced Eulerian velocity field. The associated variational field $J$ satisfies
\[
\frac{\mathrm{d}}{\mathrm{d}t}J(t)=\nabla_x v\bigl(t,x(t)\bigr)\,J(t).\]
Denote the largest eigenvalue of symmetric part of the local velocity gradient $S(t)$ as
\[
\lambda_{\max}(t):=\lambda_{\max}\bigl(S(t)\bigr),
\]
where $S(t)$ is defines in Eq.(\ref{eq:strain_tensor}).
The core problem is to determine whether the onset of local path splitting along the interpolating flow can be characterized through the evolution of \(\lambda_{\max}(t)\), and in particular to define a rigorous and computable criterion that identifies the first splitting time $t_s\in(0,1)$ and the corresponding spatial location $x_{t_s}\in\mathbb R^d$ from the behavior of \(\lambda_{\max}(t)\).
\end{problem}

\subsection{Induced velocity field and Jacobian dynamics}

We now derive the Eulerian velocity field induced by the interpolation ansatz \eqref{eq:ps_interp} and compute its spatial Jacobian matrix. We restrict attention to the case of \(\mathbb R^2\).

\begin{proposition}
\label{prop:nabla_v_explicit}
Let $X_t=\alpha(t)X_0+\beta(t)X_1+\gamma(t)Z, t\in[0,1]$, \(X_0\sim \mathcal N(0,I_2)\), \(Z\sim \mathcal N(0,I_2)\), and \(X_0\), \(Z\) are mutually independent. Assume that the terminal random variable \(X_1\) is supported on the finite set $\{y_1,\dots,y_N\}\subset \mathbb R^2$,
with prior weights $\mathbb P(X_1=y_i)=\pi_i, \pi_i>0, \sum_{i=1}^N \pi_i=1$.
Then the spatial Jacobian matrix of \(v\) is given by
\begin{equation}
\nabla_x v(t,x)
=
A(t) I_2
+
\bigl(\beta'(t)-A(t)\beta(t)\bigr)\,
\frac{\beta(t)}{\sigma^2(t)}\,
\mathrm{Cov}_{w(t,x)}(Y),
\label{eq:explicit_nabla_v}
\end{equation}
where $A(t):=\frac{\alpha(t)\alpha'(t)+\gamma(t)\gamma'(t)}{\sigma^2(t)},\sigma^2(t):=\alpha^2(t)+\gamma^2(t)$, and \(Y\) denotes the discrete random variable taking values in \(\{y_1,\dots,y_N\}\) with posterior weights $
\mathbb P(Y=y_i)=w_i(t,x),\,i=1,\dots,N$,
where $
w_i(t,x)
={\pi_i\exp\!\left(-\frac{\|x-\beta(t)y_i\|^2}{2\sigma^2(t)}\right)}\Big/
{\sum_{j=1}^N \pi_j\exp\!\left(-\frac{\|x-\beta(t)y_j\|^2}{2\sigma^2(t)}\right)}$. The covariance $=
\mathrm{Cov}_{w(t,x)}(Y)
=
\sum_{i=1}^N w_i(t,x)\,y_i y_i^\top
-
m(t,x)m(t,x)^\top$
 with
$
m(t,x):=\sum_{i=1}^N w_i(t,x)\,y_i.$
\end{proposition}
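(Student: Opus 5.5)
The plan is to compute $v(t,x)=\mathbb E[\dot X_t\mid X_t=x]$ in closed form by first conditioning on $X_1$, and then to differentiate in $x$. Write $U:=\alpha(t)X_0+\gamma(t)Z$. Since $X_0$ and $Z$ are independent standard Gaussians in $\mathbb R^2$, we have $U\sim\mathcal N(0,\sigma^2(t)I_2)$, and $U$ is independent of $X_1$ (independence of $X_1$ from $(X_0,Z)$ is implicitly assumed in the statement, as the prior/posterior weights suggest). Given $X_1=y_i$, the state is $X_t=\beta y_i+U$, so its conditional density is Gaussian with mean $\beta y_i$ and covariance $\sigma^2 I_2$. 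Bayes' rule then gives exactly the posterior weights $w_i(t,x)$ stated in the proposition.

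The pathwise velocity is $\dot X_t=\dot\alpha X_0+\dot\gamma Z+\beta' X_1$. The key step is the Gaussian regression of $(X_0,Z)$ on $U$: the vector $(X_0,Z,U)$ is jointly Gaussian and independent of $X_1$, so
\[
\mathbb E[\alpha' X_0+\gamma' Z\mid U]=\frac{\alpha\alpha'+\gamma\gamma'}{\sigma^2}\,U=A(t)\,U .
\]
Conditioning on $X_t=x$ and $X_1=y_i$ means $U=x-\beta y_i$. Averaging over the posterior then yields
\[
v(t,x)=\sum_i w_i\bigl(A(x-\beta y_i)+\beta' y_i\bigr)=A\,x+(\beta'-A\beta)\,m(t,x).
\]
This tower-property argument is the step that needs care. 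The conditioning event is jointly on $U$ and $X_1$ through $x$, so I would justify it by writing the conditional expectation as a ratio of integrals against the mixture density. Alternatively, one can note that $\dot X_t$ given $(X_t,X_1)$ depends on $(X_0,Z)$ only through $U$, up to an independent Gaussian component of mean zero.

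Finally I differentiate. The term $Ax$ contributes $A I_2$, so it remains to show $\nabla_x m=\frac{\beta}{\sigma^2}\mathrm{Cov}_w(Y)$. Writing $w_i\propto\pi_i\exp(\ell_i)$ with $\ell_i=-\|x-\beta y_i\|^2/(2\sigma^2)$, we get $\nabla_x\ell_i=-(x-\beta y_i)/\sigma^2$. The softmax derivative is $\nabla_x w_i=w_i\bigl(\nabla\ell_i-\sum_j w_j\nabla\ell_j\bigr)=\frac{\beta}{\sigma^2}w_i(y_i-m)^\top$, since the $x$-terms cancel. Hence
\[
\nabla_x m=\sum_i y_i\,\nabla_x w_i^{\,}=\frac{\beta}{\sigma^2}\sum_i w_i\,y_i(y_i-m)^\top=\frac{\beta}{\sigma^2}\Bigl(\sum_i w_i y_iy_i^\top-mm^\top\Bigr).
\]
Combining the two terms gives \eqref{eq:explicit_nabla_v}.

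This computation is routine; the main obstacle is the rigorous justification of the conditional expectation for $\dot X_t$. I would also note that $\sigma^2(t)>0$ is guaranteed for $t\in(0,1)$ because $\gamma>0$, and I would treat $t=0,1$ as limits, or exclude them, where $\sigma^2$ may vanish.
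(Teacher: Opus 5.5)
Your proposal is correct and follows the same route as the paper: condition on $X_1=y_i$ to get the posterior weights, apply Gaussian regression of $\alpha' X_0+\gamma' Z$ on $U=\alpha X_0+\gamma Z$ to obtain $v=Ax+(\beta'-A\beta)m$, then use the softmax gradient to show $\nabla_x m=\frac{\beta}{\sigma^2}\mathrm{Cov}_{w}(Y)$. The independence of $X_1$ from $(X_0,Z)$ and the positivity of $\sigma^2(t)$, which you flag explicitly, are also needed by the paper's proof, which uses them without stating them.
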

\begin{proof}
    Conditioned on \(X_1=y_i\), we have
\begin{equation}\label{eq:th_conditional_law}
X_t\mid (X_1=y_i)\sim \mathcal N\!\bigl(\beta(t)y_i,\sigma^2(t)I_2\bigr).
\end{equation}
By Bayesian theorem, the posterior weights are
\begin{equation}\label{eq:th_weights}
w_i(t,x)
=
\mathbb P(X_1=y_i\mid X_t=x)
=
\frac{\pi_i\exp\!\left(-\frac{\|x-\beta(t)y_i\|^2}{2\sigma^2(t)}\right)}
{\sum_{j=1}^{N}\pi_j\exp\!\left(-\frac{\|x-\beta(t)y_j\|^2}{2\sigma^2(t)}\right)}.
\end{equation}
The posterior mean of the terminal point is
\begin{equation}\label{eq:th_m}
m(t,x):=\mathbb E[X_1\mid X_t=x]
=
\sum_{i=1}^{N}w_i(t,x)y_i.
\end{equation}

Define $U_t=\alpha(t)X_0+\gamma(t)Z$, we have $U_t\sim \mathcal{N}(0,(\alpha^2(t)+\gamma^2(t))I_2)$ and $\dot{U}_t=\alpha'(t)X_0+\gamma'(t)Z$. Since $\mathbb{E}[\dot{U}]=0,\mathbb{E}[{U}]=0$, the covariance
\begin{align*}
    \mathrm{Cov}(\dot{U},U)&=\mathbb{E}[\dot{U}U]=\mathbb{E}[(\alpha'(t)X_0+\gamma'(t)Z)(\alpha(t)X_0+\gamma(t)Z)]\\
    &=\alpha'(t)\alpha(t)\,\mathbb E[X_0X_0^\top]+\alpha'(t)\gamma(t)\,\mathbb E[X_0Z^\top]+\gamma'(t)\alpha(t)\,\mathbb E[ZX_0^\top]+\gamma'(t)\gamma(t)\,\mathbb E[ZZ^\top],
\end{align*}
and the covariance $\mathrm{Cov}(U_t)=(\alpha^2(t)+\gamma^2(t))I_2$
From the independence of $X_0$ and $Z$, we have $\mathbb E[X_0Z^\top]=\mathbb E[ZX_0^\top]=\mathbb E[X_0]\mathbb E[Z]=0$. The yields
\begin{equation*}
    \mathrm{Cov}(\dot{U},U)=\alpha'(t)\alpha(t)\,\mathbb E[X_0X_0^\top]+\gamma'(t)\gamma(t)\,\mathbb E[ZZ^\top]=(\alpha'(t)\alpha(t)
    +\gamma'(t)\gamma(t))I_2
\end{equation*}
By the standard Gaussian regression formula, we get
\begin{equation*}
\mathbb E[\dot{U}_t\mid U_t=u]
=
\operatorname{Cov}(\dot{U}_T,U_t)\,(\operatorname{Cov}U_t)^{-1}u=\frac{\alpha'(t)\alpha(t)
    +\gamma'(t)\gamma(t)}{\alpha^2(t)+\gamma^2(t)}u:=A(t)u,
\end{equation*}
where $A(t)=\frac{\alpha'(t)\alpha(t)
    +\gamma'(t)\gamma(t)}{\alpha^2(t)+\gamma^2(t)}$.
That means
\begin{align*}
    \mathbb{E}[\dot{X}_t\mid X_t=x,X_1=y_i]
=\mathbb{E}[\dot{U}_t+\beta'_tX_1 \mid X_t=x,X_1=y_i]=
A(t)\bigl(x-\beta(t)y_i\bigr)+\beta'(t)y_i.
\end{align*}
From Eq.(\ref{eq:th_m}) and above equation, we obtain
\begin{align*}
    v(t,x)=\mathbb{E}[\dot{X}_t\mid X_t=x]=A(t)\bigl(x-\beta(t)m(t,x)\bigr)+\beta'(t)m(t,x).
\end{align*}
Take gradient to $x$ yielding
\begin{align}\label{eq_gradv}
    \nabla_x v(t,x)=A(t)I_2+\bigl(\beta'(t)-A(t)\beta(t)\bigr)\nabla m.
\end{align}

Let
\[
s_i(t,x):=\log \pi_i-\frac{\|x-\beta(t)y_i\|^2}{2\sigma^2(t)},
\qquad
w_i(t,x)=\frac{e^{s_i(t,x)}}{\sum_{j=1}^N e^{s_j(t,x)}}.
\]
Then
\[
\nabla_x s_i(t,x)=-\frac{x-\beta(t)y_i}{\sigma^2(t)}.
\]
By the softmax gradient formula,
\[
\nabla_x w_i(t,x)
=
w_i(t,x)\left(\nabla_x s_i(t,x)-\sum_{j=1}^N w_j(t,x)\nabla_x s_j(t,x)\right).
\]
Using
\[
m(t,x)=\sum_{j=1}^N w_j(t,x)y_j,
\]
we obtain
\[
\sum_{j=1}^N w_j(t,x)\nabla_x s_j(t,x)
=
-\frac{x-\beta(t)m(t,x)}{\sigma^2(t)}.
\]
Hence
\[
\nabla_x w_i(t,x)
=
\frac{\beta(t)}{\sigma^2(t)}\,w_i(t,x)\bigl(y_i-m(t,x)\bigr).
\]
Therefore,
\[
\nabla_x m(t,x)
=
\sum_{i=1}^N y_i\otimes \nabla_x w_i(t,x)
=
\frac{\beta(t)}{\sigma^2(t)}
\sum_{i=1}^N w_i(t,x)\,y_i\bigl(y_i-m(t,x)\bigr)^\top.
\]
Expanding the last term gives
\[
\nabla_x m(t,x)
=
\frac{\beta(t)}{\sigma^2(t)}
\left(
\sum_{i=1}^N w_i(t,x)\,y_i y_i^\top
-
m(t,x)m(t,x)^\top
\right),
\]
which is exactly
\[
\nabla_x m(t,x)
=
\frac{\beta(t)}{\sigma^2(t)}\,\mathrm{Cov}_{w(t,x)}(Y).
\]
Then Eq.(\ref{eq_gradv}) becomes
\begin{equation*}
    \nabla_x v(t,x)
=
A(t) I_2
+
\bigl(\beta'(t)-A(t)\beta(t)\bigr)\,
\frac{\beta(t)}{\sigma^2(t)}\,
\mathrm{Cov}_{w(t,x)}(Y).
\end{equation*}
\end{proof}

The next proposition shows that the largest eigenvalue of the symmetric part of the Jacobian characterizes the strongest instantaneous stretching direction of the local flow, and therefore provides a natural dynamical indicator for path splitting.
\begin{proposition}\label{prop:max_eig_stretching}
Assume the velocity field \(v(t,x)\) is \(C^1\) in \(x\).
Let $\lambda_{\max}(t):=\lambda_{\max}(S(t))$, then for every \(t\in(0,1)\), $
\lambda_{\max}(t)
=
\max_{\|e\|=1} e^\top S(t)e.$
In particular, if \(\lambda_{\max}(t)>0\), then there exists a unit direction \(e_t\) such that an infinitesimal perturbation \(\delta x_t=\varepsilon e_t\) satisfies
\[
\frac{\mathrm{d}}{\mathrm{d}t}\frac12\|\delta x_t\|^2
=
\delta x_t^\top S(t)\delta x_t
=
\lambda_{\max}(t)\|\delta x_t\|^2
>0.
\]
Hence the local flow is instantaneously stretching along the principal eigenvector direction associated with \(\lambda_{\max}(t)\). Consequently, large positive values of \(\lambda_{\max}(t)\) provide a natural dynamical indicator of local stretching, and therefore a candidate signature of path splitting.
\end{proposition}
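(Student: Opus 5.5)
The plan is to split the statement into a purely linear-algebraic part and a dynamical part. The dynamical part reuses the first order variational equation \eqref{eq:th_jacobian} from the earlier proposition.

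\textbf{Variational characterization.} Since $v$ is $C^1$ in $x$, the Jacobian $\nabla_x v(x_t,t)$ is a well-defined real matrix. Hence $S(t)$ in \eqref{eq:strain_tensor} is real symmetric. By the spectral theorem, $S(t)=Q\Lambda Q^\top$ with $Q$ orthogonal and $\Lambda=\mathrm{diag}(\lambda_1\ge\dots\ge\lambda_d)$. For a unit vector $e$, set $f=Q^\top e$, which is also a unit vector. Then
\[
e^\top S(t)e=\sum_k \lambda_k f_k^2\le \lambda_1 .
\]
Equality holds at $e=q_1$, the first column of $Q$. This gives the Rayleigh quotient identity $\lambda_{\max}(t)=\max_{\|e\|=1}e^\top S(t)e$, and the maximum is attained. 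I will define $e_t:=q_1$, the principal unit eigenvector.

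\textbf{Dynamics of the perturbation.} I interpret $\delta x_t$ as $\varepsilon J(t)$, where $J$ solves the linearized equation \eqref{eq:th_jacobian}. Equivalently, $\delta x_t$ is the first order deviation of a nearby trajectory started at $x_t+\varepsilon e_t$. Then $\frac{\mathrm d}{\mathrm dt}\delta x_t=\nabla_x v\,\delta x_t$, and
\[
\frac{\mathrm d}{\mathrm dt}\tfrac12\|\delta x_t\|^2=\delta x_t^\top\nabla_x v\,\delta x_t .
\]
For any matrix $B$ and vector $u$, the antisymmetric part of $B$ contributes nothing to $u^\top B u$, so $u^\top Bu=u^\top\tfrac{B+B^\top}{2}u$. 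The right-hand side therefore equals $\delta x_t^\top S(t)\delta x_t$.

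\textbf{Evaluating at the principal direction.} At the instant $t$ where $\delta x_t=\varepsilon e_t$, the eigenvalue relation $S(t)e_t=\lambda_{\max}(t)e_t$ gives
\[
\delta x_t^\top S(t)\delta x_t=\lambda_{\max}(t)\,\varepsilon^2=\lambda_{\max}(t)\|\delta x_t\|^2 .
\]
This is strictly positive whenever $\lambda_{\max}(t)>0$ and $\varepsilon\neq0$. The concluding sentences of the statement are interpretive and follow from this strict positivity.

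\textbf{Main obstacle.} The only delicate point is interpretive. The identity holds instantaneously, at the time $t$ when the perturbation is aligned with $e_t$. It does not hold for all later times, since $e_t$ generally rotates. I will state this explicitly rather than claim a growth estimate over an interval. For the $C^1$ assumption, I will note that it suffices for $S(t)$ to be defined and for the linearization to be valid.
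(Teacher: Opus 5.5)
The paper states this proposition without proof; the text goes straight to the Algorithm section, so there is no authors' argument to compare against. Your proof is the standard argument the statement implicitly relies on, and it is correct. It has three steps: the Rayleigh-quotient identity from the spectral theorem for the real symmetric matrix $S(t)$; the variational equation \eqref{eq:th_jacobian} applied to $\delta x_t=\varepsilon J(t)$, where the skew-symmetric part of $\nabla_x v$ drops out of the quadratic form; and evaluation at the instant when $\delta x_t$ is aligned with the principal eigenvector.

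Your caveat that the equality with $\lambda_{\max}(t)\|\delta x_t\|^2$ holds only at that instant is right, and it is more careful than the statement itself.

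The one point to tighten is your claim that $C^1$ regularity in $x$ alone makes the linearization valid. Differentiability of the flow with respect to initial data, and hence \eqref{eq:th_jacobian}, also needs some regularity in $t$, for example joint continuity of $v$ and $\nabla_x v$ in $(t,x)$. That is what the earlier proposition's ``sufficiently smooth'' hypothesis supplies.
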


\section{Algorithm}

\subsection{Discontinuous Trajectory Modeling via neural networks}
To characterize the non-smooth evolution of data distributions, we propose a trajectory model governed by discontinuous neural networks. We define the state transition from the initial configuration $\mathbf{x}_0$ to the target configuration $\mathbf{x}_1 $ using time-dependent interpolation coefficients.
\begin{equation}
    X_t=\alpha(t)X_0+\beta(t)X_1+\gamma(t)Z,
\qquad t\in[0,1]
\end{equation}
Unlike standard linear interpolation, our approach employs a Discontinuous Dense Layer to learn the coefficients $\alpha(t), \beta(t), \gamma (t)$. This architecture utilizes a modified Heaviside activation function to capture abrupt transitions in the path, allowing the model to represent non-adiabatic topological changes.

The instantaneous dynamics of the system are represented by the velocity field $\mathbf{v}_t(x)$. Since the analytical derivative of the discontinuous PathNet may be ill-defined at jump points, we utilize the Central Difference Method to robustly estimate the velocity:
\begin{equation}
    V(t)=\dot{X}(t)=\dot\alpha(t)X_0+\dot\beta(t)X_1+\dot\gamma(t)Z
\end{equation}
\begin{align}
\dot{\alpha}_{\phi}(t_i) &\approx \frac{\alpha_{\phi}(t_{i+1}) - \alpha_{\phi}(t_{i-1})}{2\Delta t} \ , \\
\dot{\beta}_{\psi}(t_i) &\approx \frac{\beta_{\psi}(t_{i+1}) - \beta_{\psi}(t_{i-1})}{2\Delta t} \ ,\\
\dot{\gamma}_{\theta}(t_i) &\approx \frac{\gamma_{\theta}(t_{i+1}) - \gamma_{\theta}(t_{i-1})}{2\Delta t}.
\end{align}
This numerical scheme ensures stable gradient estimation across the temporal domain, providing the necessary kinematic information for subsequent physical analysis.

\subsection{Loss Function Designed for Physical Constraints}
To ensure the learned trajectories are physically meaningful and adhere to the predefined boundary conditions, we formulate a multi-objective loss function. The total loss $\mathcal{L}_{total}$ is composed of a boundary constraint term and a regularity term:
\subsubsection{Boundary Constraint Loss}
Since the system must transition precisely from the initial state $\mathbf{x}_0$ at $t=0$ to the target state $\mathbf{x}_1$ at $t=1$, we impose a boundary loss on the interpolation coefficients $a(t)$ and $b(t)$. This ensures that the network reconstructs the identity mapping at the temporal boundaries:
\begin{equation}
    \mathcal{L}_{bound}=(\alpha(0)-1)^2 + \beta(0)^2 + \alpha(1)^2 + (\beta(1)-1)^2 + \gamma(0)^2 + \gamma(1)^2.
\end{equation}

\subsubsection{Energy Regularization}
To prevent the model from learning erratic or non-physical paths, we introduce a kinetic energy penalty. This term acts as a regularizer that minimizes the $L_2$ norm of the velocity field across the entire temporal domain $[0, 1]$. By minimizing the mean square of the velocity, we encourage the model to find the principle of lowest energy path:
\begin{equation}
    \mathcal{L}_{energy}=\int_0^1 \mathbb{E}[\|\mathbf{v}_t(x)\|^2] dt=\int_0^1\int_X \rho_t(x)\|\mathbf{v}_t(x)\|^2 dxdt.
\end{equation}

\subsubsection{Total Objective and Optimization}
The final optimization objective is a weighted combination of these terms, where $\lambda_{bound}$ and $\lambda_{energy}$ are hyperparameters that balance the strictness of the boundary conditions against the smoothness of the trajectory:$$\mathcal{L}_{total} = \lambda_{bound} \mathcal{L}_{bound} + \lambda_{energy} \mathcal{L}_{energy}$$We employ the AdamW optimizer with decoupled weight decay to minimize this objective, allowing the discontinuous PathNet to autonomously discover the critical time points of topological change while maintaining global stability.



\subsection{Statistical Indicators and Top-K Analysis Framework}
To validate the sensitivity of the proposed model to localized topological changes, we recognize that splitting is a localized phenomenon rather than a global shift,so we implement a Top-$K$ Selection Strategy. Building upon this, we construct a tripartite indicator system to serve as a robust early-warning signal for topological transitions. 
These algorithm can be written as follows:



\begin{algorithm}[H]
\caption{Physics-Informed Path Network Training and Indicator Analysis}
\label{alg:path_net}
\begin{algorithmic}[1]
\REQUIRE Source samples $X_0 \sim \mathcal{N}(0, I)$, Target samples $X_1 \sim \mathbb{P}_{\text{data}}$, Neural Network parameterized by $\theta$.
\REQUIRE Time steps $N$, batch size $B$, learning rate $\eta$.
\STATE \textbf{Phase 1: Training the Path Network}
\WHILE{not converged}
    \STATE Sample batch $x_0 \sim X_0$, $x_1 \sim X_1$, and noise $z \sim \mathcal{N}(0, I)$.
    \STATE Sample $t \sim \mathcal{U}(0, 1)$.
    \STATE Predict coefficients $\alpha(t), \beta(t), \gamma(t) = \text{PathNet}_\theta(t)$.
    \STATE Compute the velocity field $v_t = \alpha'(t)x_0 + \beta'(t)x_1 + \gamma'(t)z$.
    \STATE Calculate boundary loss $\mathcal{L}_{\text{bound}}$ enforcing constraints at $t=0$ and $t=1$.
    \STATE Calculate energy loss $\mathcal{L}_{\text{energy}} = \int_0^1 \mathbb{E}[\|\mathbf{v}_t(x)\|^2] dt$.
    \STATE Compute total loss $\mathcal{L} = \lambda_1 \mathcal{L}_{\text{bound}} + \lambda_2 \mathcal{L}_{\text{energy}}$.
    \STATE Update $\theta \leftarrow \theta - \eta \nabla_\theta \mathcal{L}$.
\ENDWHILE
\STATE \textbf{Phase 2: Trajectory Generation and Indicator Extraction}
\FOR{$t \in [0, 1]$ with step $\Delta t$}
    \STATE Calculate interpolated particle positions $x_t = a(t)x_0 + b(t)x_1 + c(t)z$.
    \STATE Construct gradient of velocity field $ \nabla_x v_t$ with Eq.(\ref{eq:th_jacobian}) and compute the maximum eigenvalue $\lambda_{\max}(S(t))$, $S(t)$ is defined by Eq.(\ref{eq:strain_tensor}).
\ENDFOR
\RETURN Optimal trajectories $\{x_t\}_{t=0}^1$ and corresponding phase-transition indicators.
\end{algorithmic}
\end{algorithm}

\section{Experiments}
In the following experiments, we demonstrate the effectiveness of the proposed framework through several visualizations and quantitative measures. Specifically, we present a series of evolution maps that show the particle density, the $\delta$, and the distribution of the maximum eigenvalue. These maps help us observe how the transport geometry changes during the evolution process.

To further analyze the overall behavior, we also track the cumulative energy and eigenvalue integrals over time. By combining spatial visualizations with these global indicators, we can better understand the relationship between local curvature and the convergence of probability paths. This analysis also provides useful insights into the stability and sensitivity of the generative model when dealing with complex distributions.
\subsection{Two Moons}
The Two Moons dataset represents a typical case of binary mode separation. In this task, the initial Gaussian distribution needs to split into two arc-shaped structures. Our framework shows a clear spatiotemporal pattern during this process.

From the Grid Evolution maps, particle separation begins around $t \in [0.7, 0.8]$. At the same time, the Delta ($\delta$) and eigenvalue maps display strong intensity in the region where the flow starts to split. These high-intensity areas indicate that the transport dynamics are changing rapidly in this region.

At the global level, we track the Top-30 eigenvalue integral together with its slope over time. The eigenvalue integral reaches a clear peak at time $t^*$, while the slope shows a rapid change in the earlier interval. This behavior appears when the particle flow begins to split into two arcs. The consistency between the spatial patterns and these indicator changes suggests that the proposed metric can effectively identify the critical stage of mode separation in the Two Moons transport process.

\begin{figure}[H]
    \centering
    \includegraphics[width=12cm, height=5cm]{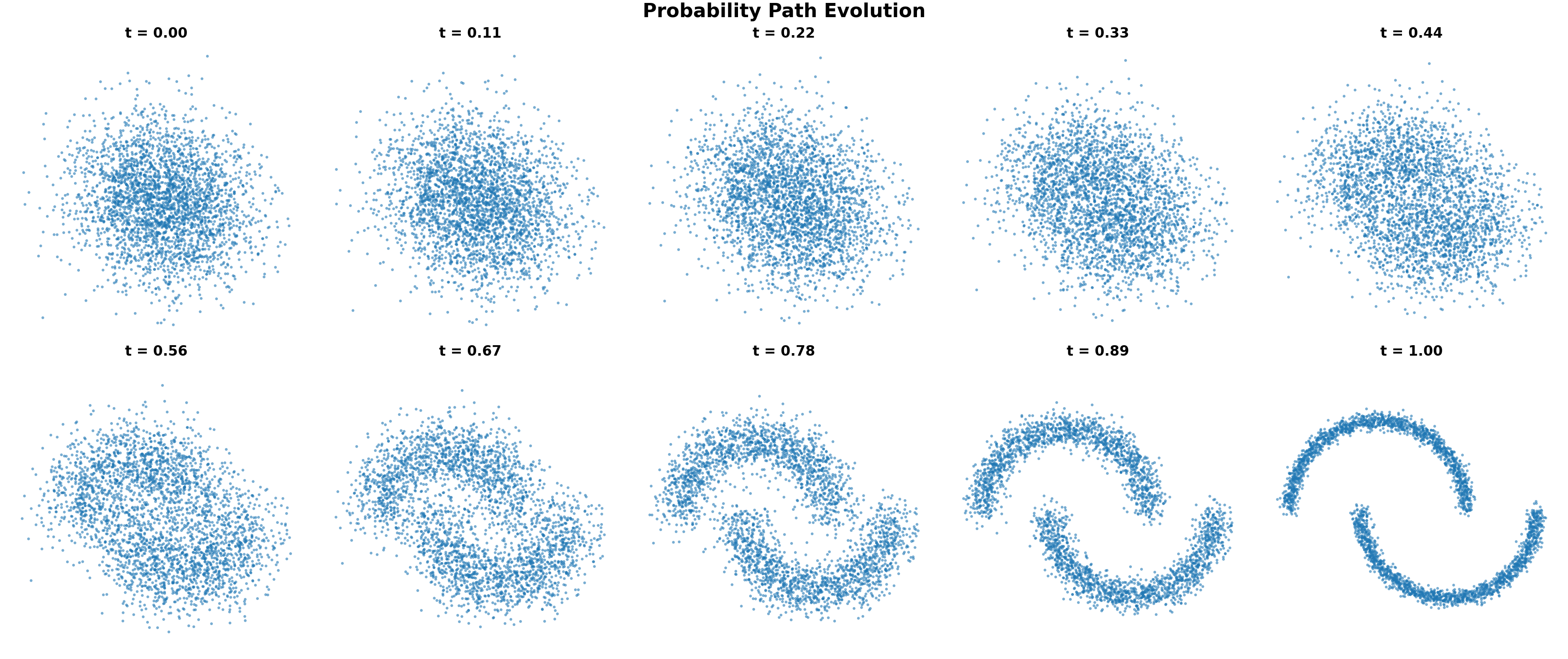}
    \caption{Probability Path Evolution. Snapshots of particle transport from a Gaussian prior to the Two Moons distribution. The $x, y$ axes represent 2D space, illustrating the structural splitting from $t=0.00$ to $t=1.00$.}
    \label{moons 3000}
\end{figure}

\begin{figure}[H]
    \centering
    \includegraphics[width=12cm, height=5cm]{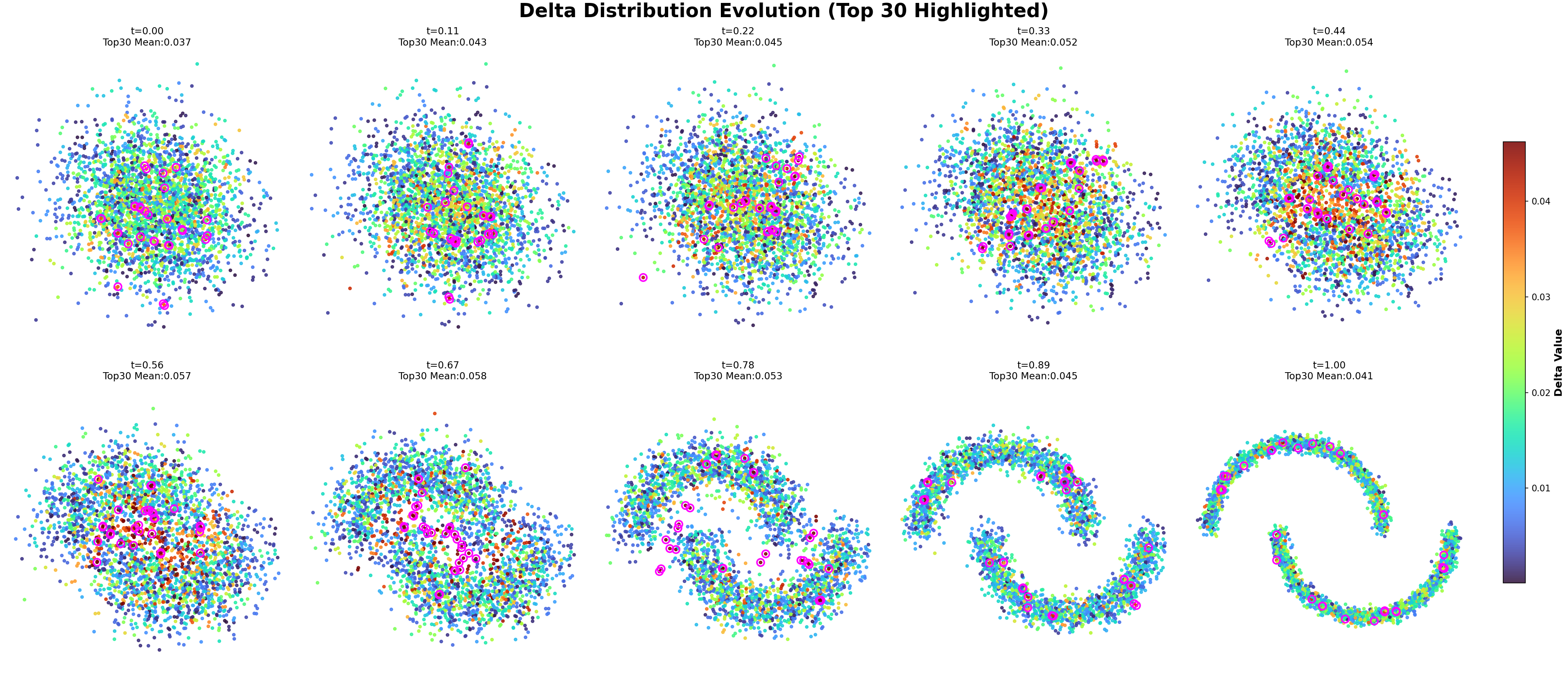}
    \caption{Delta ($\Delta$) Distribution Evolution. Points are colored by their $\Delta$ value (splitting intensity). The $x, y$ axes are spatial coordinates. Pink circles highlight the Top-30 particles concentrated at the flow's bifurcation neck.}
    \label{moons 3000 delta}
\end{figure}

\begin{figure}[H]
    \centering
    \includegraphics[width=12cm, height=5cm]{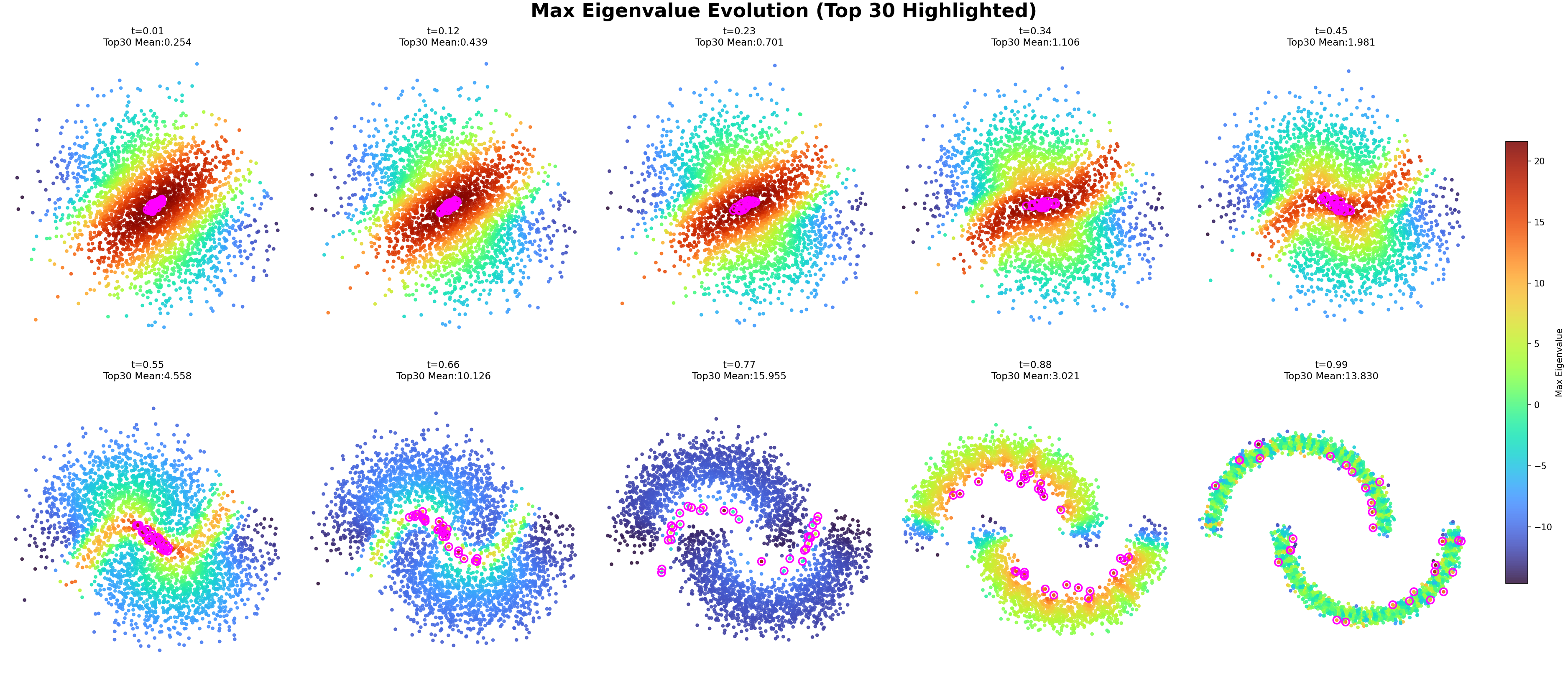}
    \caption{Max Eigenvalue Evolution. Points are colored by the Jacobian's max eigenvalue, indicating local manifold curvature. The $x, y$ axes are spatial coordinates, with pink circles tracking the Top-30 high-curvature hotspots.}
    \label{moons 3000 eig}
\end{figure}

\begin{figure}[H]
    \centering
    \includegraphics[width=12cm, height=6cm]{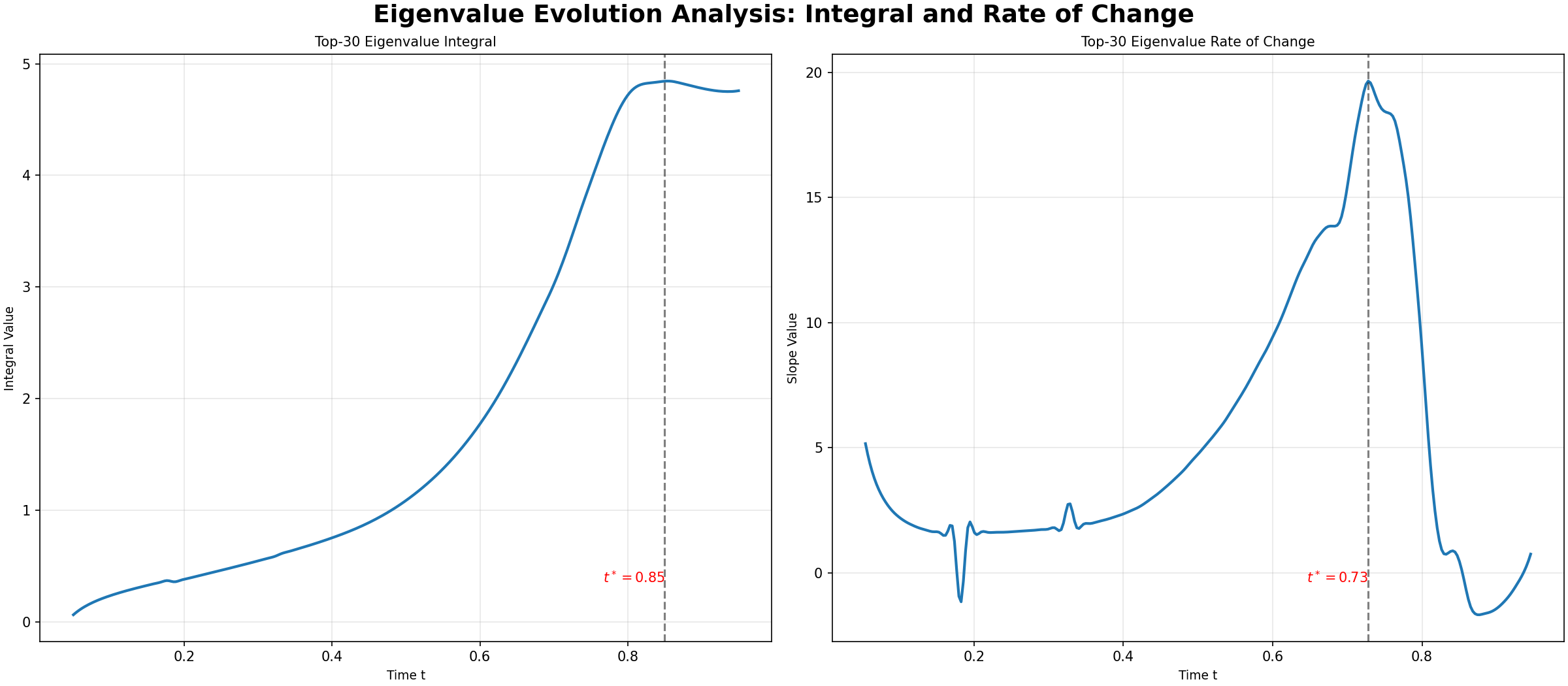}
    \caption{Eigenvalue Indicator Analysis. (Left) Cumulative integral of Top-30 eigenvalues. (Right) Rate of Top-30 eigenvalues' change (slope). The $x$-axis is time $t$; the vertical dashed line marks the peak $t^*$ signaling the onset of splitting.}
    \label{moons 3000 indicator}
\end{figure}

\subsection{Checkerboard}
The Checkerboard dataset requires the probability mass to be separated into a grid of disjoint square regions. During this process, the distribution undergoes repeated splitting and compression as it evolves toward the target pattern.

From the Grid Evolution maps, the particle distribution gradually forms a checkerboard structure. At the same time, the Delta ($\delta$) and eigenvalue maps reveal several high-intensity regions that appear near the boundaries between different modes. These regions form a grid-like pattern and indicate where the transport flow becomes more unstable.

From a global perspective, we examine the Top-30 eigenvalue integral and its slope during the evolution. The eigenvalue integral increases and reaches its peak around time $t^*$, while the slope changes sharply in the same region. This moment corresponds to the stage when the grid structure begins to separate into distinct square modes. The agreement between the spatial maps and these indicator variations suggests that the proposed metric can capture the key transition stage where the checkerboard structure first forms.

\begin{figure}[H]
    \centering
    \includegraphics[width=12cm, height=5cm]{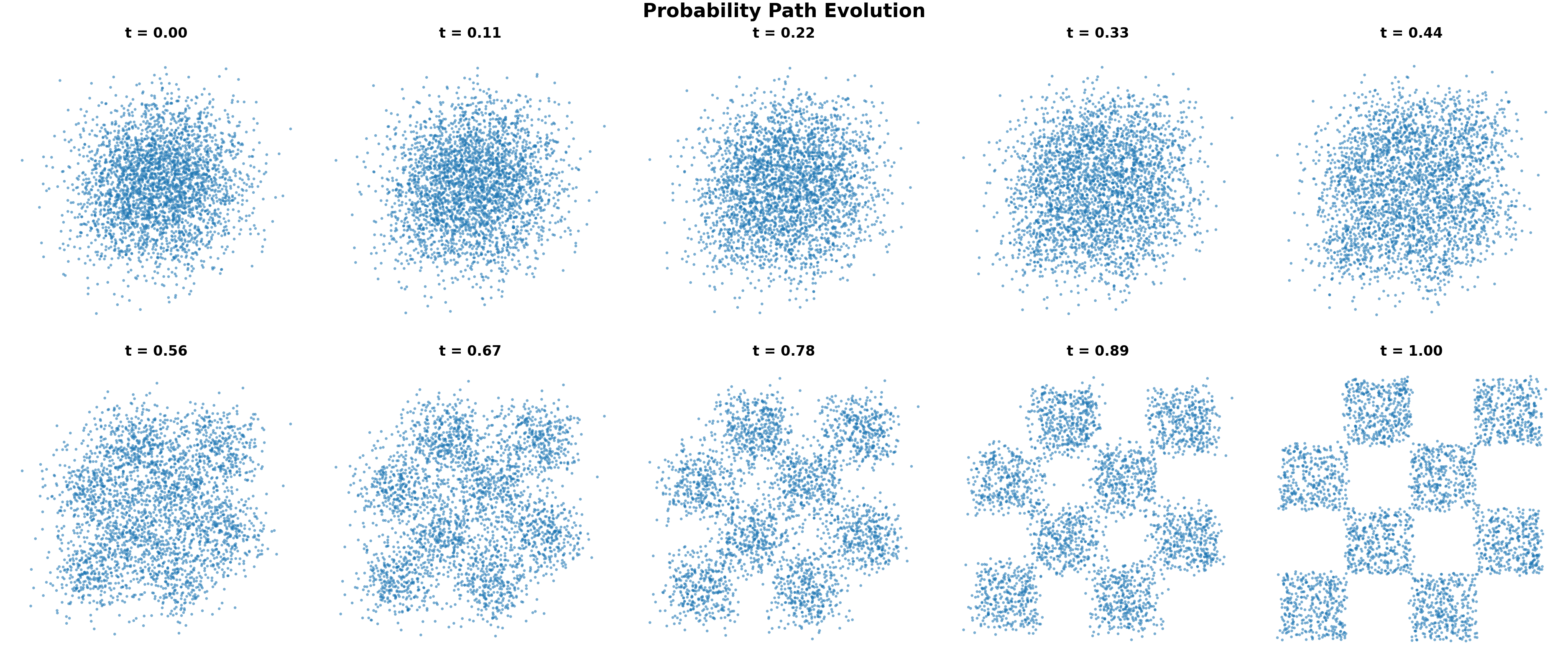}
    \caption{Probability Path Evolution. Snapshots of particle transport from a Gaussian prior to the Checkerboard distribution. The $x, y$ axes represent 2D space, illustrating the structural splitting from $t=0.00$ to $t=1.00$.}
    \label{checkboard 3000}
\end{figure}

\begin{figure}[H]
    \centering
    \includegraphics[width=12cm, height=5cm]{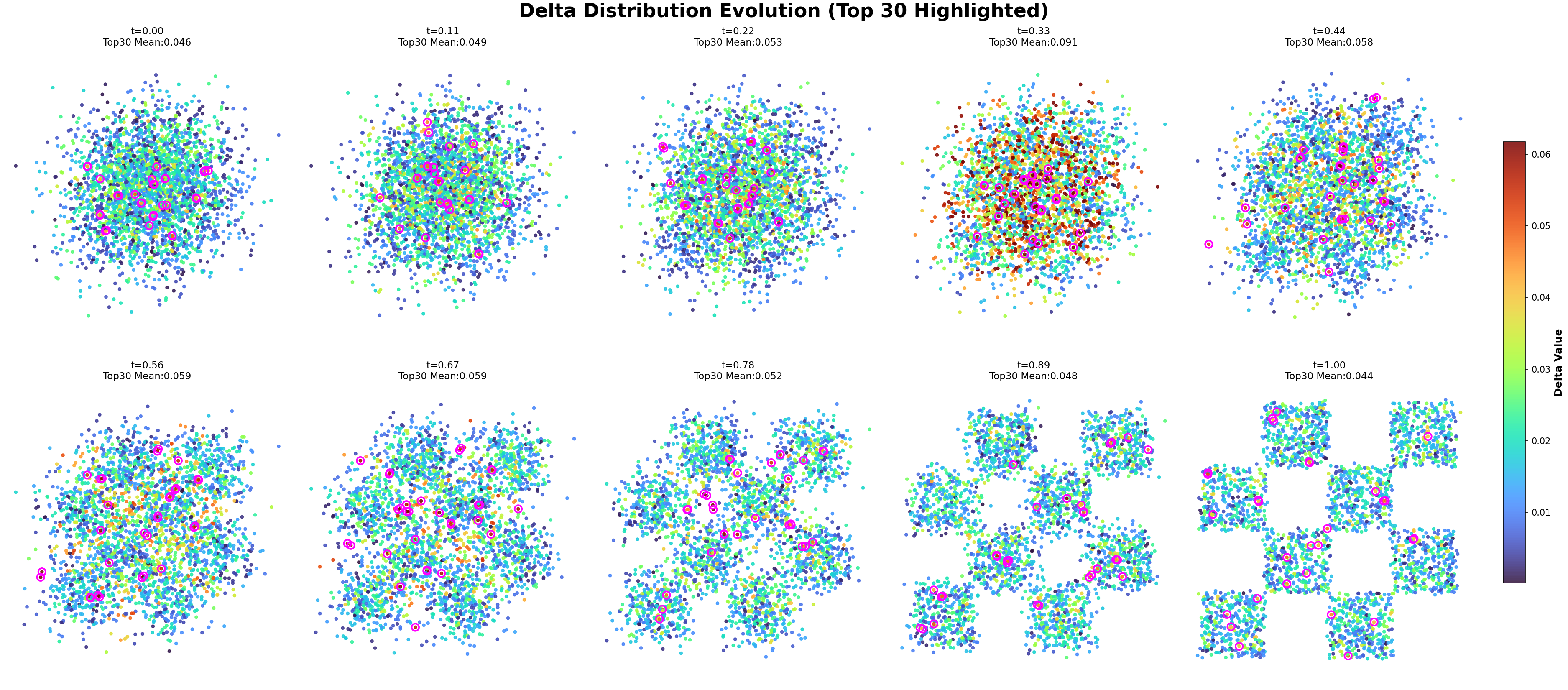}
    \caption{Delta ($\Delta$) Distribution Evolution. Points are colored by their $\Delta$ value (splitting intensity). The $x, y$ axes are spatial coordinates. Pink circles highlight the Top-30 particles concentrated at the flow's bifurcation neck.}
    \label{checkboard 3000 delta}
\end{figure}

\begin{figure}[H]
    \centering
    \includegraphics[width=12cm, height=5cm]{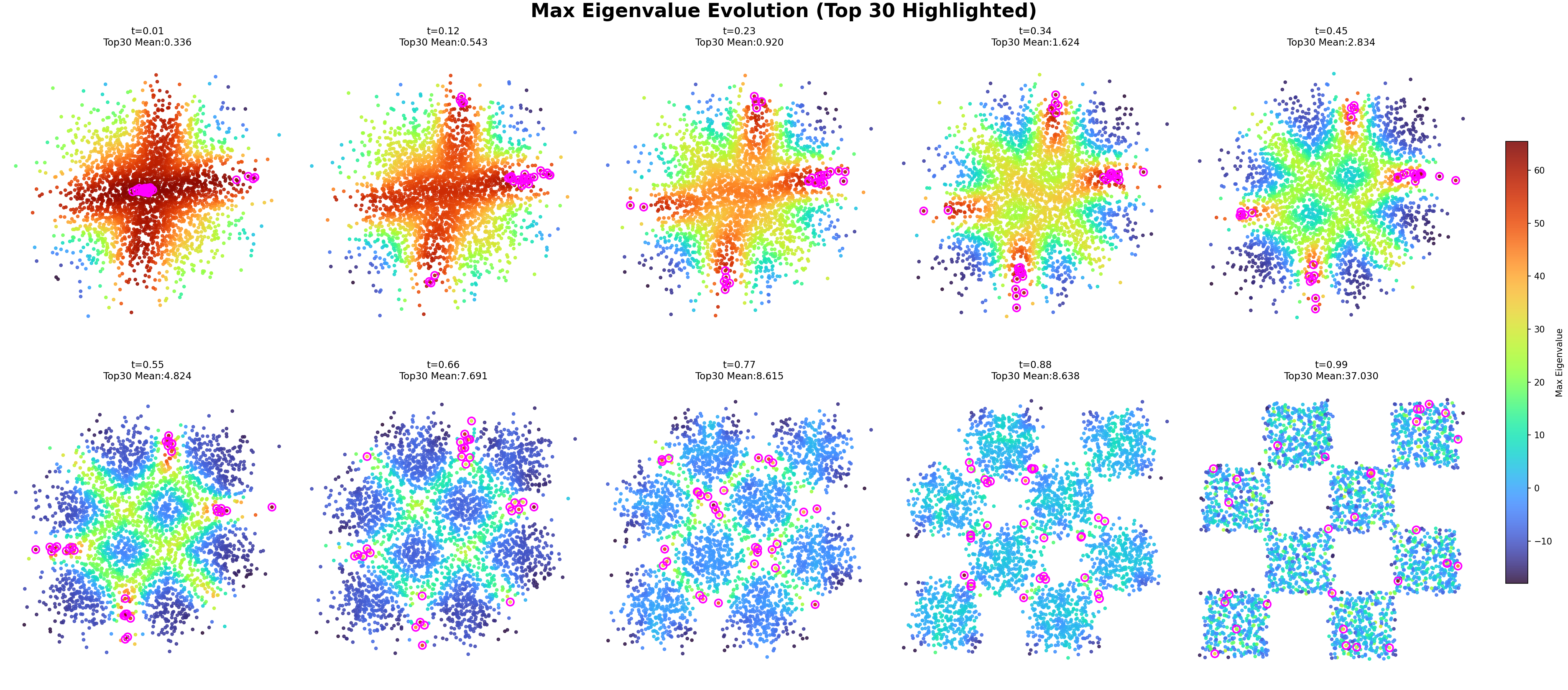}
    \caption{Max Eigenvalue Evolution. Points are colored by the Jacobian's max eigenvalue, indicating local manifold curvature. The $x, y$ axes are spatial coordinates, with pink circles tracking the Top-30 high-curvature hotspots.}
    \label{checkboard 3000 eig}
\end{figure}

\begin{figure}[H]
    \centering
    \includegraphics[width=12cm, height=6cm]{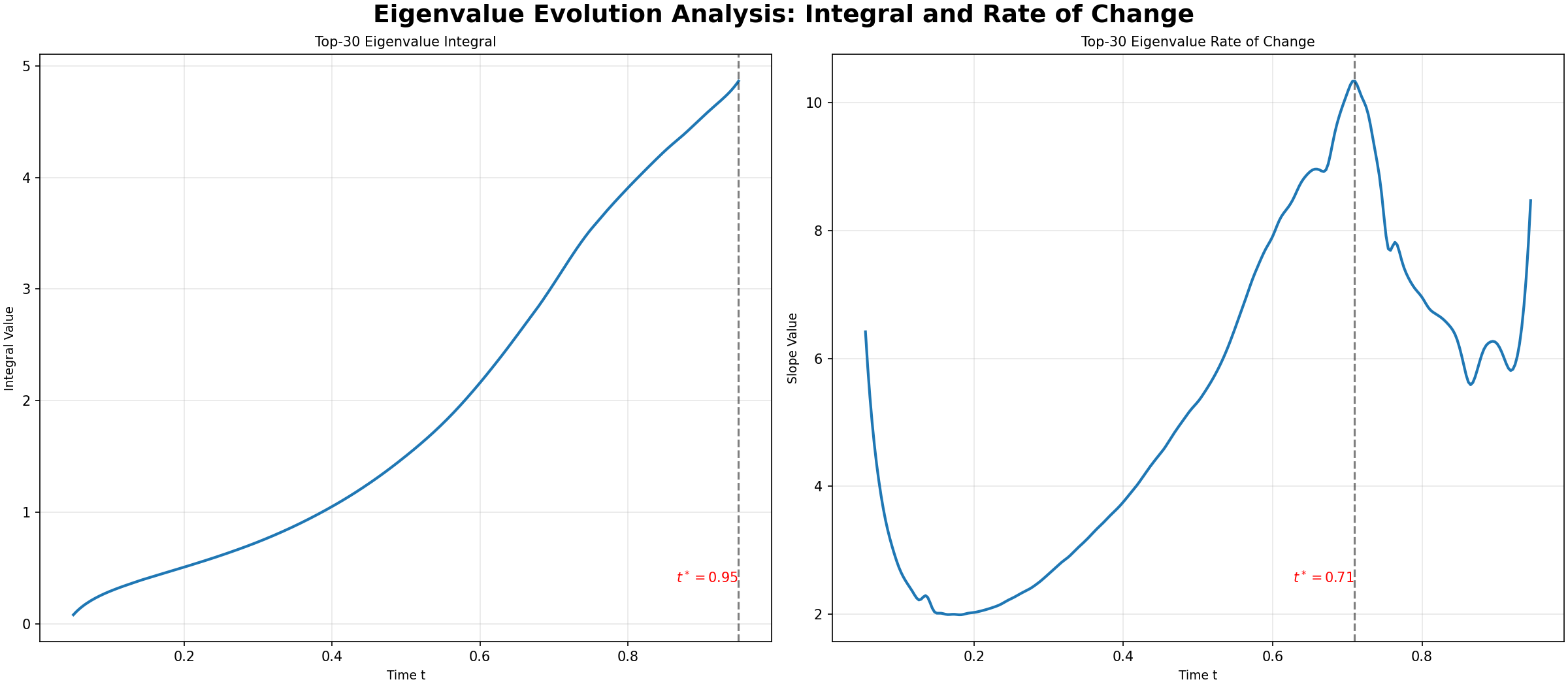}
    \caption{Eigenvalue Indicator Analysis. (Left) Cumulative integral of Top-30 eigenvalues. (Right) Rate of Top-30 eigenvalues' change (slope). The $x$-axis is time $t$; the vertical dashed line marks the peak $t^*$ signaling the onset of splitting.}
    \label{checkboard 3000 indicator}
\end{figure}

\subsection{S-Curve}
The S-Curve dataset represents probability transport along a continuous but highly non-linear manifold. During this process, the particle distribution gradually bends and stretches to match the characteristic S-shaped structure.

From the spatial heatmaps, the Top-30 particles tend to concentrate in regions with strong curvature. These regions correspond to the bending segments of the manifold and align well with the S shape observed in the Grid Evolution maps. The concentration of particles in these areas indicates where the transport dynamics change more rapidly.

At the global level, we analyze the Top-30 eigenvalue integral and its slope over time. The eigenvalue integral reaches its maximum near time $t^*$, and the slope exhibits a noticeable change around the same stage. This period corresponds to the moment when the manifold experiences the strongest bending and deformation. The consistency between the spatial heatmaps and these indicator changes suggests that the metric can effectively reflect the geometric curvature during the evolution.

\begin{figure}[H]
    \centering
    \includegraphics[width=12cm, height=5cm]{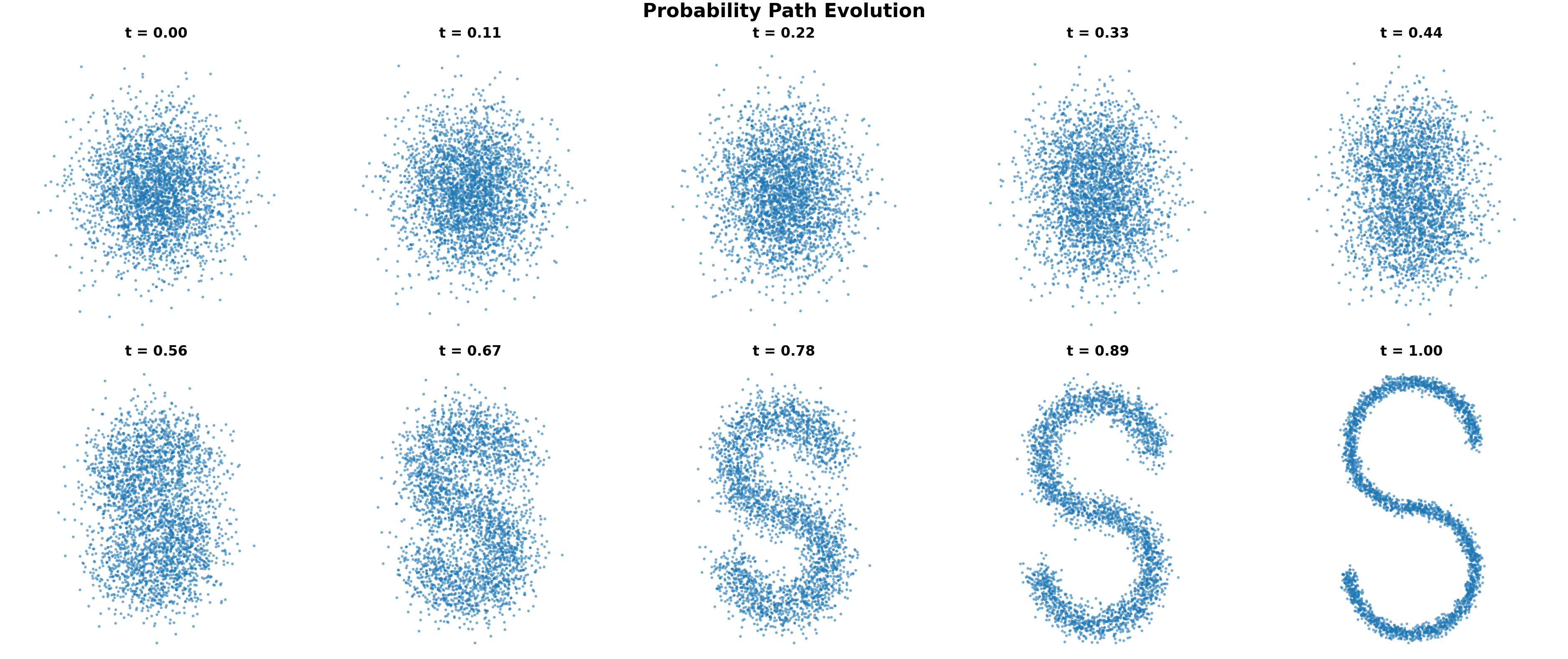}
    \caption{Probability Path Evolution. Snapshots of particle transport from a Gaussian prior to the S-Curve distribution. The $x, y$ axes represent 2D space, illustrating the structural splitting from $t=0.00$ to $t=1.00$.}
    \label{S 3000}
\end{figure}

\begin{figure}[H]
    \centering
    \includegraphics[width=12cm, height=5cm]{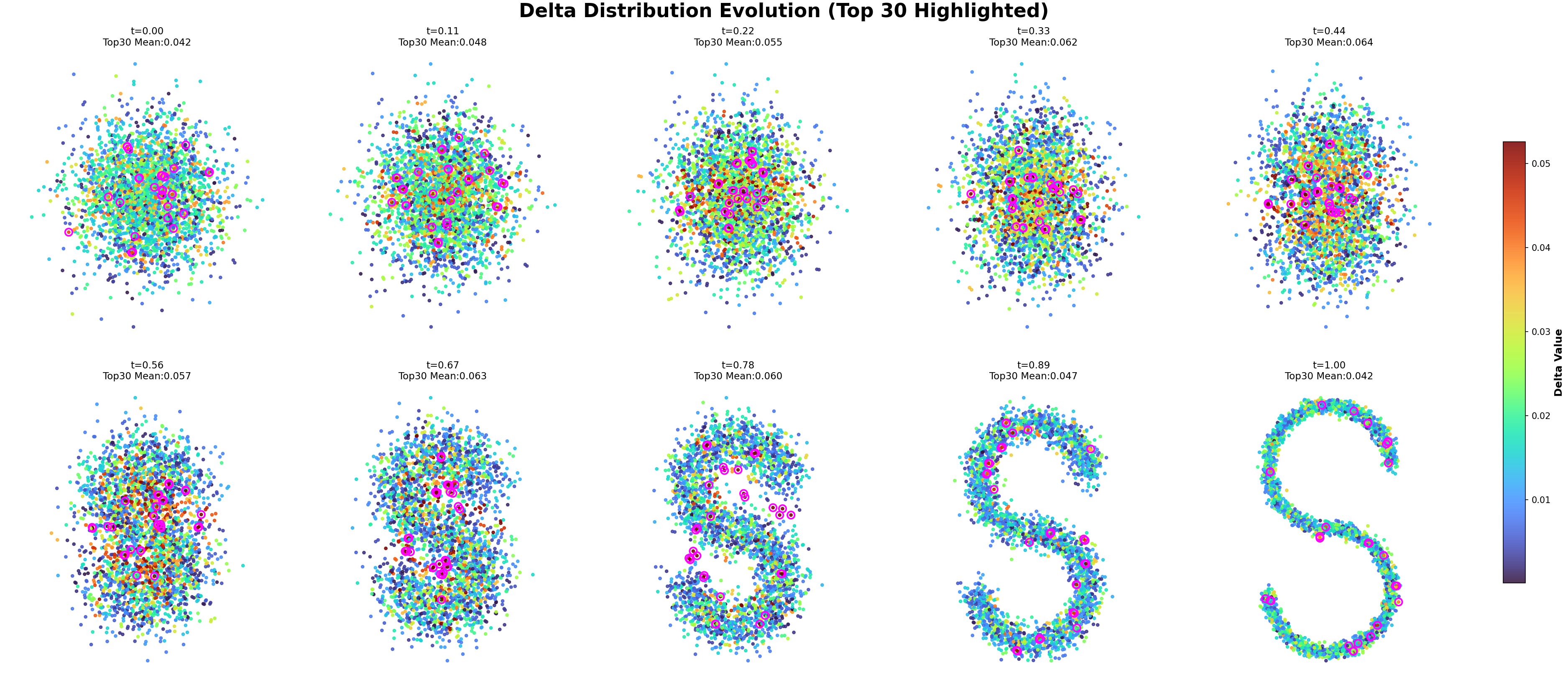}
    \caption{Delta ($\Delta$) Distribution Evolution. Points are colored by their $\Delta$ value (splitting intensity). The $x, y$ axes are spatial coordinates. Pink circles highlight the Top-30 particles concentrated at the flow's bifurcation neck.}
    \label{S 3000 delta}
\end{figure}

\begin{figure}[H]
    \centering
    \includegraphics[width=12cm, height=5cm]{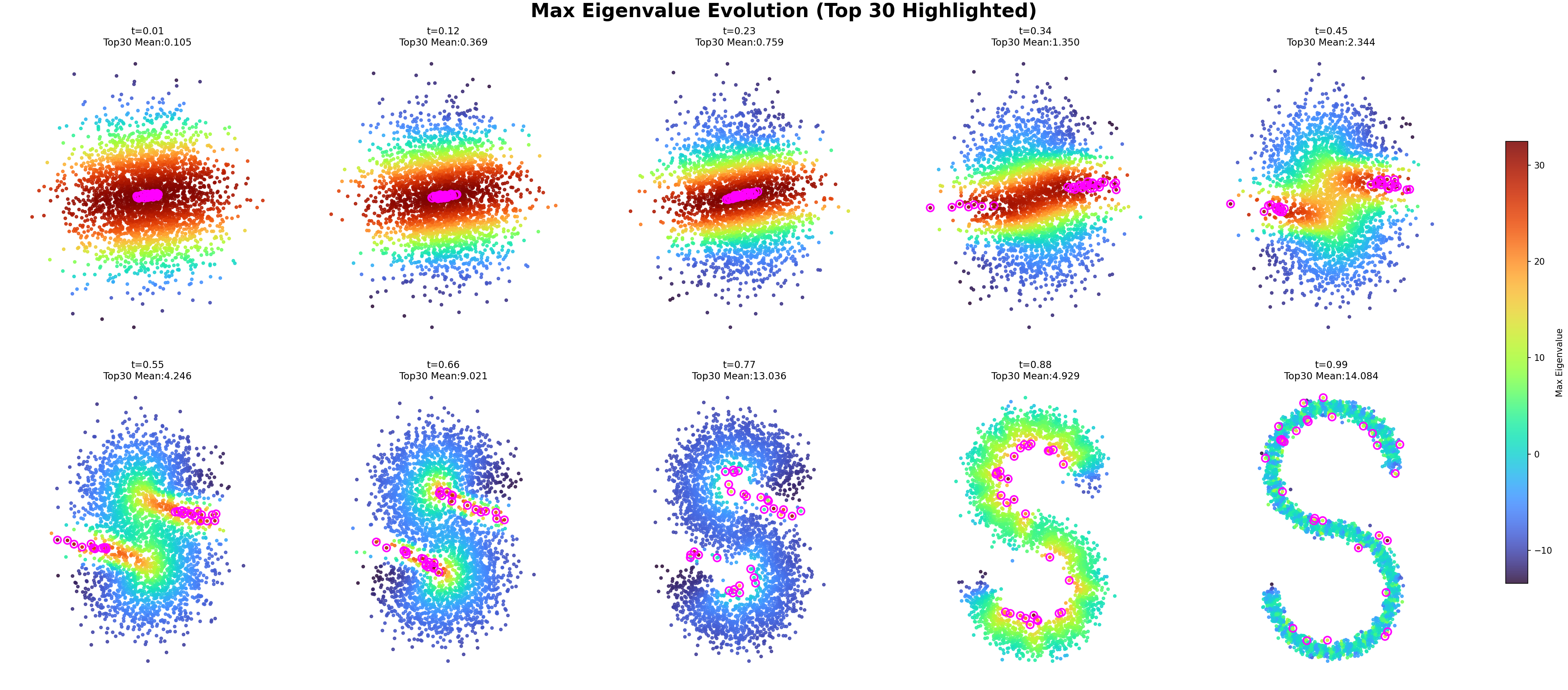}
    \caption{Max Eigenvalue Evolution. Points are colored by the Jacobian's max eigenvalue, indicating local manifold curvature. The $x, y$ axes are spatial coordinates, with pink circles tracking the Top-30 high-curvature hotspots.}
    \label{S 3000 eig}
\end{figure}

\begin{figure}[H]
    \centering
    \includegraphics[width=12cm, height=6cm]{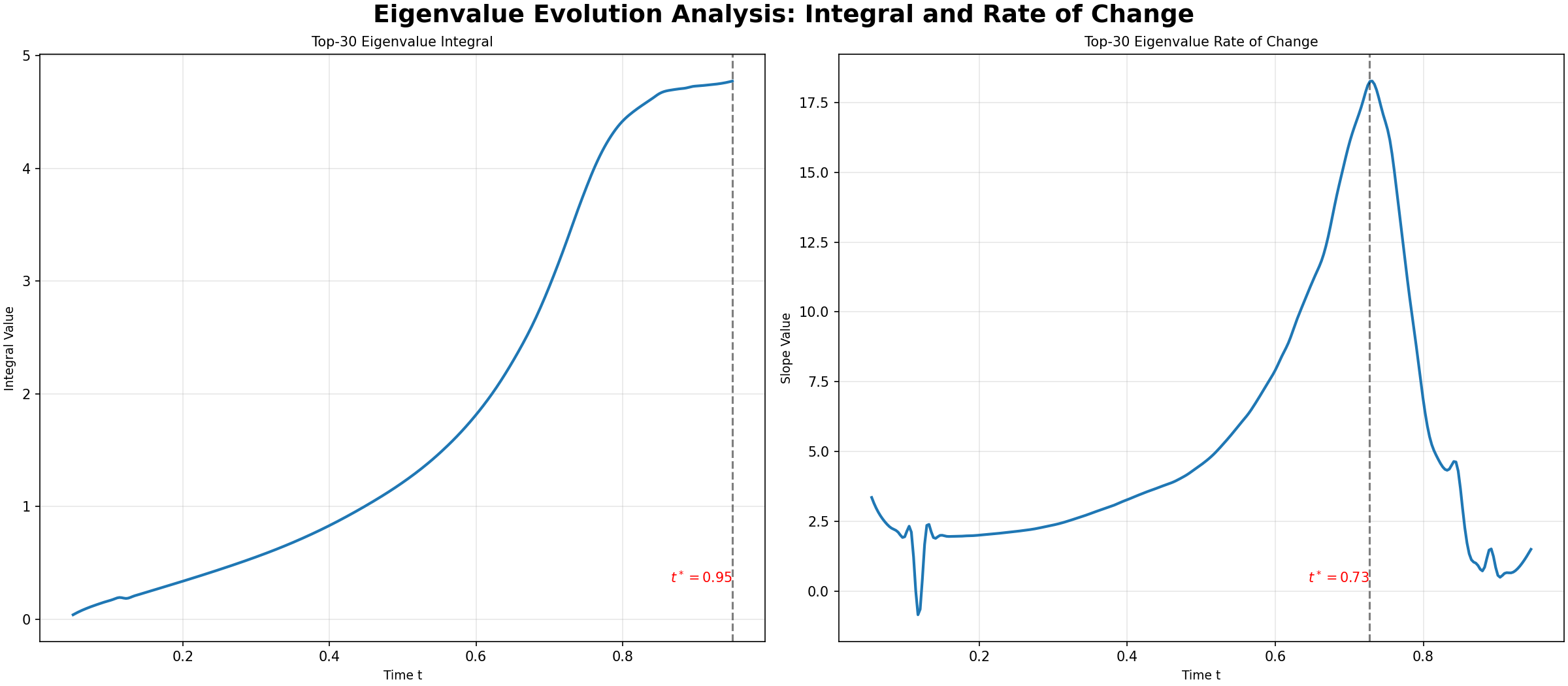}
    \caption{Eigenvalue Indicator Analysis. (Left) Cumulative integral of Top-30 eigenvalues. (Right) Rate of Top-30 eigenvalues' change (slope). The $x$-axis is time $t$; the vertical dashed line marks the peak $t^*$ signaling the onset of splitting.}
    \label{S 3000 indicator}
\end{figure}

\subsection{Gaussian Mixture}
The Gaussian Mixture dataset requires the probability mass to expand from a central Gaussian distribution into eight symmetric clusters. During this evolution, the transport process first breaks away from the central region and then gradually separates into multiple modes.

From the spatial maps, the early stage of the evolution shows Top-30 particles forming a high-intensity ring around the center. This ring marks the boundary where the flow begins to move away from the origin. As the evolution continues, the particle distribution further separates and gradually forms eight distinct regions that correspond to the target clusters in the Grid Evolution maps.

At the global level, we monitor the Top-30 eigenvalue integral and its slope throughout the evolution. The eigenvalue integral shows a sharp peak at time $t^*$, while the slope changes rapidly during the same interval. This behavior occurs during the early rupture at the center, before the eight clusters become clearly separated in the spatial maps. These indicator variations reveal the onset of the first bifurcation and provide an early signal of the subsequent multi-mode separation.

\begin{figure}[H]
    \centering
    \includegraphics[width=12cm, height=5cm]{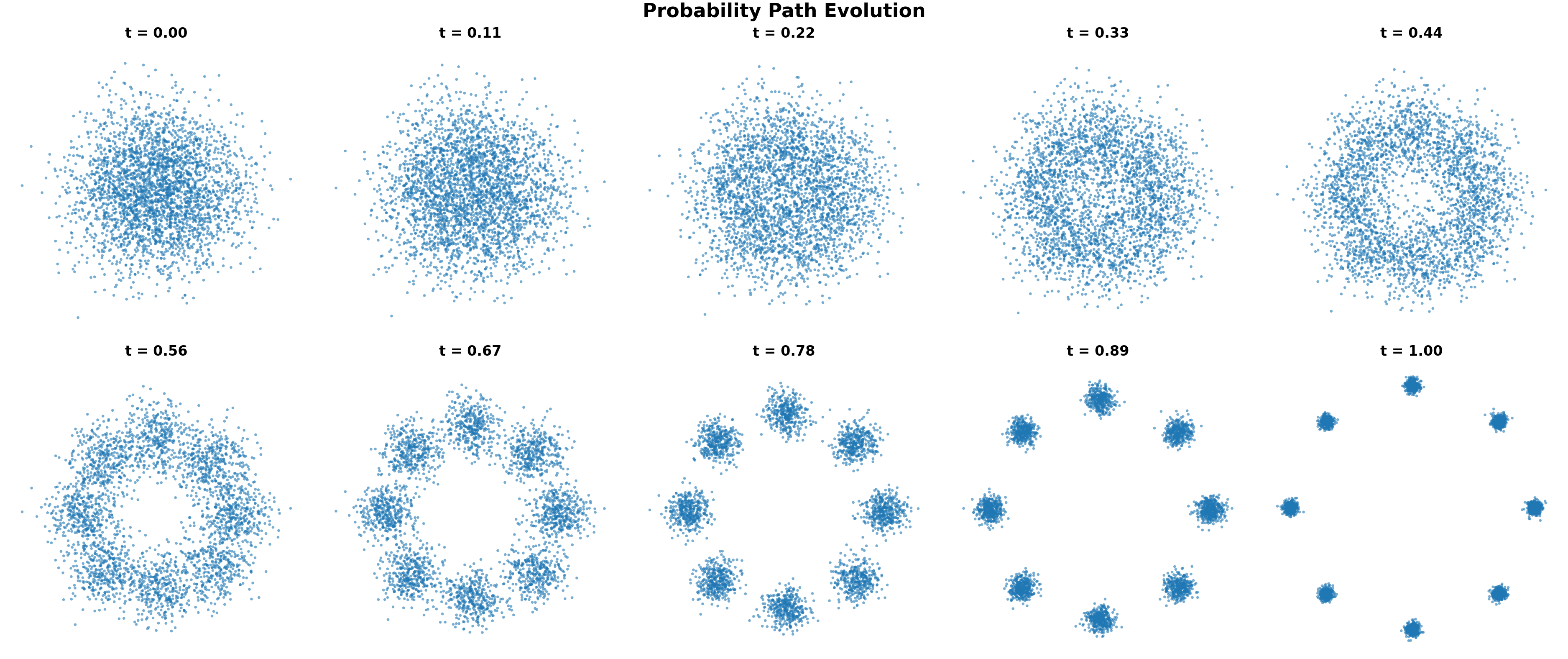}
    \caption{Probability Path Evolution. Snapshots of particle transport from a Gaussian prior to the Gaussian Mixture distribution. The $x, y$ axes represent 2D space, illustrating the structural splitting from $t=0.00$ to $t=1.00$.}
    \label{8 3000}
\end{figure}

\begin{figure}[H]
    \centering
    \includegraphics[width=12cm, height=5cm]{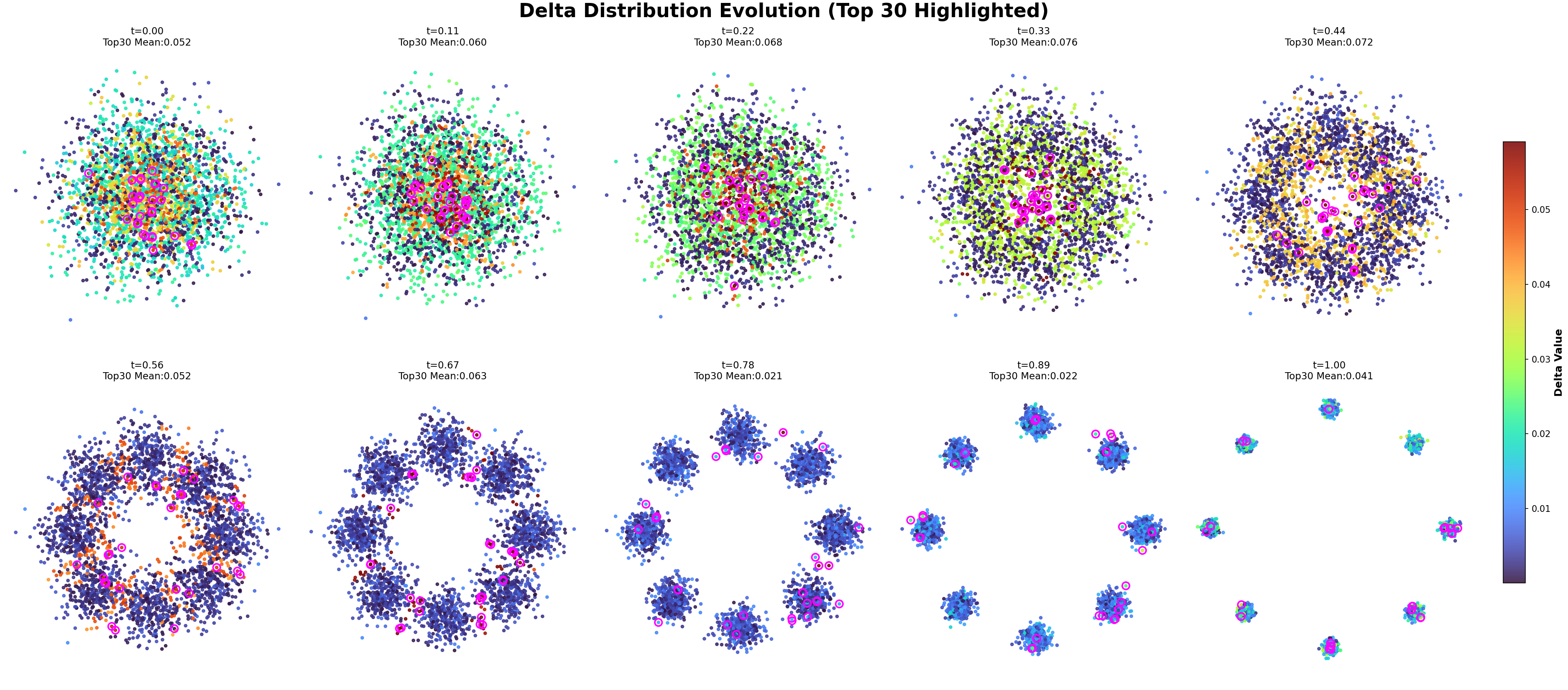}
    \caption{Delta ($\Delta$) Distribution Evolution. Points are colored by their $\Delta$ value (splitting intensity). The $x, y$ axes are spatial coordinates. Pink circles highlight the Top-30 particles concentrated at the flow's bifurcation neck.}
    \label{8 3000 delta}
\end{figure}

\begin{figure}[H]
    \centering
    \includegraphics[width=12cm, height=5cm]{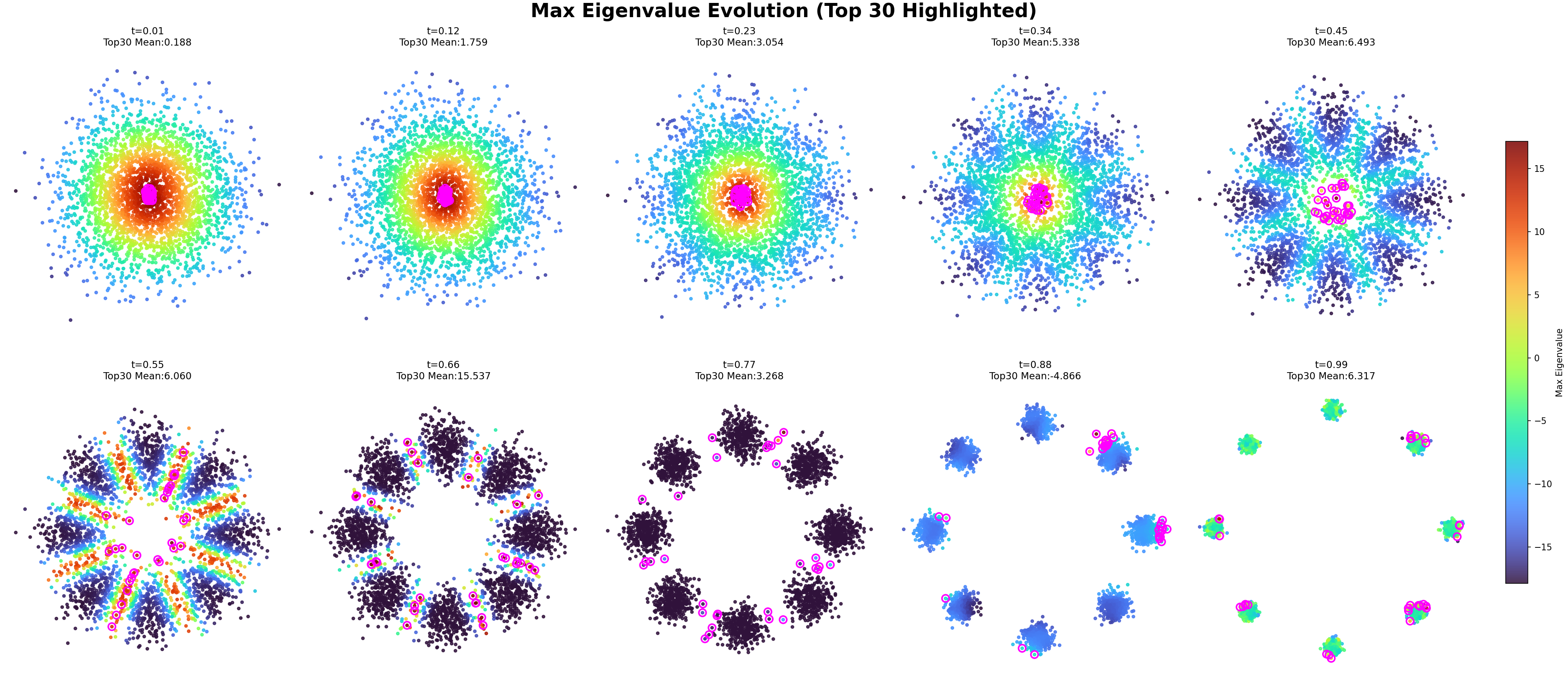}
    \caption{Max Eigenvalue Evolution. Points are colored by the Jacobian's max eigenvalue, indicating local manifold curvature. The $x, y$ axes are spatial coordinates, with pink circles tracking the Top-30 high-curvature hotspots.}
    \label{8 3000 eig}
\end{figure}

\begin{figure}[H]
    \centering
    \includegraphics[width=12cm, height=6cm]{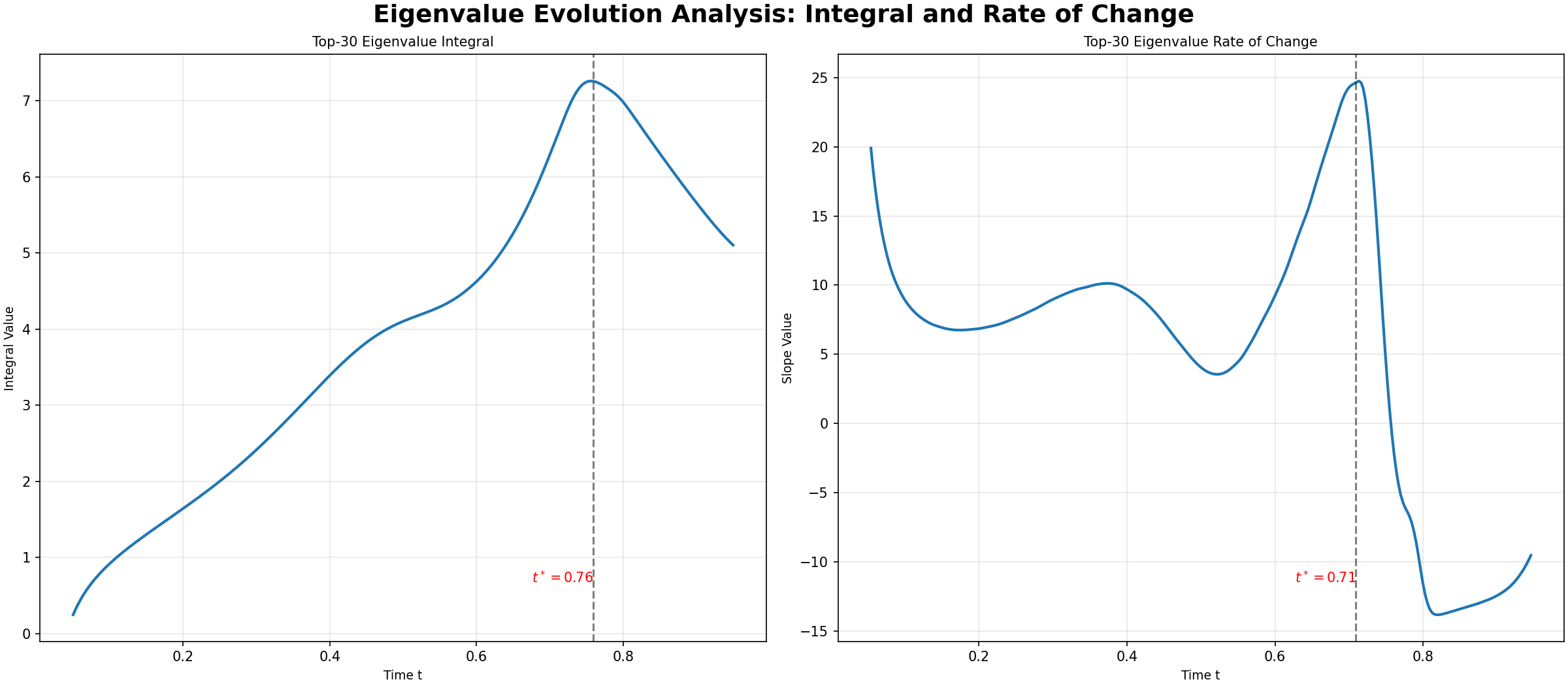}
    \caption{Eigenvalue Indicator Analysis. (Left) Cumulative integral of Top-30 eigenvalues. (Right) Rate of Top-30 eigenvalues' change (slope). The $x$-axis is time $t$; the vertical dashed line marks the peak $t^*$ signaling the onset of splitting.}
    \label{8 3000 indicator}
\end{figure}
 
\section{Conclusion}
We present a generative modeling framework based on stochastic interpolation. Deep neural networks are used to parameterize the time-dependent coefficients, enabling efficient transport from simple priors to complex data distributions.To address issues like mode collapse and topological changes, we examine the evolution of probability flows using differential geometry and dynamical systems. Two key indicators are introduced: the local velocity divergence ($\delta$) and the largest eigenvalues of the transport Jacobian. To avoid losing details from global averaging, we adopt a local Top-$K$ strategy to focus on regions with the strongest deformations and curvature.

Experiments show that these indicators provide early warnings for mode splitting in multi-modal distributions. For tasks involving radial divergence, grid rupture, or high-curvature manifolds, the peak times of the indicator integrals and their rates of change align closely with topological changes in the distribution. This demonstrates that our indicators help detect transport bottlenecks and offer insights into the stability and geometric evolution of generative ODEs. Overall, our work lays a foundation for building more interpretable and robust continuous-time generative models.

\bibliographystyle{plain}
\bibliography{reference}
\end{document}